\documentclass{amsart}

\usepackage[utf8]{inputenc}
\usepackage{amsmath,amsfonts,amsthm,amsopn,color,amssymb,enumitem}
\usepackage{graphicx}
\usepackage{caption}
\usepackage{subcaption}
\usepackage[colorlinks=true, urlcolor=blue, citecolor=red, linkcolor=blue]{hyperref}
\usepackage{cite}
\usepackage{esint}
\usepackage{pgfplots}
\usepackage{mathrsfs}
\usetikzlibrary{arrows}
\usepackage{verbatim}
\usepackage{mathtools}
\usepackage[margin=2.4cm]{geometry}

\pgfplotsset{compat=1.18}

\newtheorem{theorem}{Theorem}[section]
\newtheorem{lemma}[theorem]{Lemma}

\newtheorem{proposition}[theorem]{Proposition}

\newcommand{\R}{\mathbb{R}}

\renewcommand{\le}{\leqslant}

\def\bbm[#1]{\mbox{\boldmath $#1$}}
\newcommand{\beq }{\begin{equation}}
\newcommand{\eeq }{\end{equation}}

\begin{document}

\title[Regularity for widely degenerate elliptic equations]{On the second-order regularity of widely degenerate elliptic systems}

\address{Dipartimento di Matematica e Informatica, Università della Calabria, Ponte Pietro Bucci 31B, 87036 Arcavacata di Rende, Cosenza, Italy}
\author[F. Iandoli]{Felice Iandoli}
\email{felice.iandoli@unical.it}

\author[B. Sciunzi]{Berardino Sciunzi}
\email{berardino.sciunzi@unical.it}

\author[D. Vuono]{Domenico Vuono}
\email{domenico.vuono@unical.it}

\thanks{The authors are supported by PRIN PNRR P2022YFAJH \emph{Linear and Nonlinear PDEs: New directions and applications} and by Gruppo Nazionale per l'Analisi Matematica, la Probabilità. B. Sciunzi and D. Vuono have been partially supported by \emph{INdAM-GNAMPA Esistenza, regolarità e proprietà qualitative per problemi non lineari} E5324001950001. F. Iandoli is supported by \emph{INdAM-GNAMPA Esistenza e proprietà qualitative per soluzioni di equazioni ellittiche con termini di ordine inferiore} E5324001950001.}

\subjclass[2020]{Primary 35J25, 35J92; Secondary 35B65}
\keywords{Widely degenerate elliptic equations, regularity results, second-order estimates, non-degeneracy.}

\begin{abstract}
We prove local second-order regularity near the critical set for widely degenerate elliptic systems, and show that this degenerate region has zero Lebesgue measure provided the source term is non-vanishing.
\end{abstract}

\maketitle

\section{Introduction}

In this work, we consider weak solutions to the following widely degenerate elliptic system:
\begin{equation}\label{eq:Equazione_Principale}
    -\operatorname{\bf div} \left((|D \boldsymbol{u}|-1)_+^{p-1}\frac{D \boldsymbol{u}}{|D \boldsymbol{u}|}\right)=\boldsymbol{f}(x)\quad \text{in }\Omega,
\end{equation}
where $p>1$, $D{\boldsymbol u}$ is the Jacobian of the vector field ${\boldsymbol u}\,:\, \Omega \rightarrow \mathbb{R}^N$, with $N\geq 1$, $\Omega$ is an open set of $\mathbb{R}^n$, with $n\geq 2$, and $(\cdot)_+$ stands for the positive part. We shall use the notation ${\boldsymbol u}=(u^1,\ldots,u^N)$ and ${\boldsymbol f}=(f^1,\ldots,f^N)$.

Equations of the type \eqref{eq:Equazione_Principale} fall into the class of \emph{widely degenerate} elliptic problems. The main feature of this operator is that its ellipticity vanishes on the whole zone where $\{|D\boldsymbol{u}| \leq 1\}$. Such operators naturally arise in optimal transport problems and congested traffic dynamics, where the threshold $|D\boldsymbol{u}| = 1$ separates congested and uncongested regimes. We briefly recall the state of the art. For instance, Lipschitz regularity of minimizers has 
been established in \cite{Brasco,BrascoAmbrogio,Passarelli}. In 
\cite{Sant,CF1,CF2,BDGP1,BDGP2} it is proved that the composition of any continuous 
function vanishing on the set $\{|D\mathbf{u}|\le 1\}$ with $D\mathbf{u}$ is continuous. 
For second–order estimates in the scalar case we refer to \cite{AGP,GR}.

\noindent Given two matrices $\mathcal{M},\mathcal{N} \in \mathbb{R}^{q \times r}$ we define the scalar product
$ 
\mathcal{M}:\mathcal{N}=\sum_{i=1}^{q} \langle\mathcal{M}^i ,\mathcal{N}^i\rangle=\sum_{i=1}^{q} \sum_{j=1}^{r} \mathcal{M}^i_j \mathcal{N}^i_j,
$ 
 whereas $ \langle\cdot,\cdot\rangle$ denotes the scalar product of two real vectors. Moreover, we shall use the so called \emph{infinity Laplacian} $$\Delta_{\infty} \boldsymbol{u}:=D\boldsymbol{u} (D^{2}\boldsymbol{u}\, D\boldsymbol{u}),$$ see \eqref{eq:vect}, whose $k$-th component is given by $\sum_{i,s=1}^{n}\sum_{l=1}^{N} \partial_{is}u^{\,l}\,\partial_{i}u^{\,k}\, \partial_{s}u^{\,l}$, for $k= 1,\ldots,N$. For further details on this notation, we refer to Section~\ref{seconda}.

We say that a vector field $\boldsymbol{u}$ is a weak solution to the problem \eqref{eq:Equazione_Principale} if and only if $\boldsymbol{u} \in W^{1,p}_{loc}(\Omega):=W^{1,p}_{loc}(\Omega;\mathbb{R}^N)$ and 
\begin{equation}\label{weakN}
\int_{\Omega} (|D \boldsymbol{u}|-1)_+^{p-1}\left(\frac{D \boldsymbol{u}}{|D \boldsymbol{u}|}: D {\boldsymbol {\psi}}\right)\, dx = \int_{\Omega} \langle{\boldsymbol f}, {\boldsymbol{\psi}}\rangle\, dx, \quad \forall  {\boldsymbol {\psi}} \in C^{\infty}_c(\Omega).
\end{equation}

Our first  result concerns the second-order estimates for weak solutions of \eqref{eq:Equazione_Principale}. 

\begin{theorem}\label{TeoDerivateSeconde}
 Let $\Omega$ be an open set of $\R^n$ and  $\boldsymbol{f}\in W^{1,1}_{loc}(\Omega;\R^N)\cap L^q_{loc}(\Omega;\R^N)$, with $q>n$. Consider $\boldsymbol{u}$ a weak solution of \eqref{eq:Equazione_Principale}. For any  $\alpha<\min\{p-1,1\}$ and  for any $\tilde \Omega\subset\subset \hat \Omega\subset\subset\Omega$ open sets, we have 
  \begin{equation}\label{eq:stimaseconde}
     \int_{\tilde \Omega\cap\{|D\boldsymbol{u}|>1\}}(|{D\boldsymbol{u}}|-1)_+^{p-2-\alpha}\left|\Delta_{\infty} \boldsymbol{u} \right|^2\,dx\leq C,
 \end{equation}
 where $C=C(p,n,N,\alpha,\hat\Omega,\|\boldsymbol{f}\|_{L^q(\hat \Omega)},\|\boldsymbol{f}\|_{W^{1,1}(\hat \Omega)},\|D\boldsymbol{u}\|_{L^p(\hat \Omega)})$ is a positive constant. 
 \end{theorem}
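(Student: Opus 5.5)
We argue by regularization and pass to the limit. For $\varepsilon\in(0,1)$ we consider the nondegenerate approximating systems
\begin{equation*}
-\operatorname{\bf div}\Big(\big(H_\varepsilon(D\boldsymbol{u}_\varepsilon)\big)\,\frac{D\boldsymbol{u}_\varepsilon}{|D\boldsymbol{u}_\varepsilon|}+\varepsilon\,(1+|D\boldsymbol{u}_\varepsilon|^2)^{\frac{p-2}{2}}D\boldsymbol{u}_\varepsilon\Big)=\boldsymbol{f}_\varepsilon \quad\text{in } B,\qquad \boldsymbol{u}_\varepsilon=\boldsymbol{u}*\rho_\varepsilon \text{ on }\partial B,
\end{equation*}
where $H_\varepsilon$ is a smooth version of $(|\xi|-1)_+^{p-1}$, $\boldsymbol{f}_\varepsilon$ is a mollification of $\boldsymbol{f}$, and $B\subset\subset\hat\Omega$ is a ball. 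The operator depends only on $|D\boldsymbol{u}|$, so it has Uhlenbeck structure. Hence $\boldsymbol{u}_\varepsilon\in W^{2,2}_{loc}\cap C^{1}$. By the known Lipschitz results \cite{BrascoAmbrogio,Passarelli} (which need $q>n$), $\|D\boldsymbol{u}_\varepsilon\|_{L^\infty(\hat B)}$ is bounded uniformly in $\varepsilon$ in terms of $\|D\boldsymbol{u}\|_{L^p}$ and $\|\boldsymbol{f}\|_{L^q}$. Standard monotonicity arguments then give $D\boldsymbol{u}_\varepsilon\to D\boldsymbol{u}$ strongly in $L^p$ and a.e.

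The core of the argument is an $\varepsilon$-uniform version of \eqref{eq:stimaseconde} for $\boldsymbol{u}_\varepsilon$, with the weight written through $G_\varepsilon(t)$, a regularization of $(t-1)_+$.
\begin{enumerate}[label=(\roman*)]
\item Differentiate the equation in $x_s$ and test with $\boldsymbol{\psi}=\partial_s\boldsymbol{u}_\varepsilon\,\varphi(|D\boldsymbol{u}_\varepsilon|)\,\eta^2$, where $\eta$ is a cutoff, $\varphi(t)=(t-1)_+^{-\alpha}$ suitably truncated, and we sum over $s$.
\item Write $A(\xi)=a(|\xi|)\xi$ with $a(t)=(t-1)_+^{p-1}/t$. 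Then $D_\xi A(\xi)[\mathcal{M}]=a\,\mathcal{M}+\frac{a'(t)}{t}\,(\xi:\mathcal{M})\,\xi$. The main term splits into $a|D^2\boldsymbol{u}|^2\varphi$ and the radial piece $\frac{a'(t)}{t}\varphi\,\big|\nabla\tfrac{|D\boldsymbol u|^2}{2}\big|^2$.
\item Use the identity $\partial_i\tfrac{|D\boldsymbol{u}|^2}{2}=\sum_{s,l}\partial_{is}u^l\partial_su^l$, which gives $|\Delta_\infty\boldsymbol{u}|\le|D\boldsymbol{u}|\,\big|\nabla\tfrac{|D\boldsymbol u|^2}{2}\big|$. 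Thus the good terms control $(t-1)_+^{p-2}t^{-1}\varphi\,|\Delta_\infty\boldsymbol u|^2$ up to constants.
\item The term produced when the derivative falls on $\varphi(|D\boldsymbol u|)$ has the form $a\,\varphi'\,t\,\big|\nabla|D\boldsymbol u|\big|^2$ plus radial terms. Since $\varphi'\le0$ this term is negative. It is absorbed because $|\varphi'|t\le\alpha(t-1)^{-\alpha-1}t$ is compared with $\frac{a'}{t}\varphi t^2\sim(p-1)(t-1)^{p-2-\alpha}$.
\item The cutoff terms are handled by Young's inequality and the $L^\infty$ bound on $D\boldsymbol u_\varepsilon$.
\item For the source term we integrate by parts back: $\int\partial_s f\cdot\partial_s u\,\varphi\eta^2$ is bounded by $\|\boldsymbol f\|_{W^{1,1}}$ times $\sup|D\boldsymbol u|\varphi$. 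The factor $\varphi$ is unbounded near $t=1$, so the weight has to be arranged so that the test field $\partial_s u(t-1)_+^{-\alpha}$ stays bounded where it matters. Equivalently, one could take $\varphi=(t-1)_+^{1-\alpha}/t$-type weights, which multiply the $p-1$ exponent. The restriction $\alpha<1$ is what makes $(t-1)^{1-\alpha}$ bounded and vanishing at $t=1$.
\end{enumerate}

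The step I expect to be the main obstacle is making steps (iv) and (vi) compatible. A negative power of $(t-1)$ is needed so that the weight $p-2-\alpha$ appears, and the condition $\alpha<p-1$ is exactly what is needed for integrability and for the absorption constant $\alpha/(p-1)<1$. Yet the source term and the $\varepsilon$-part must stay bounded. To handle this, I would first try the test function $\partial_s\boldsymbol u_\varepsilon\,G_\varepsilon(|D\boldsymbol u_\varepsilon|)^{1-\alpha}\eta^2$. After the algebra above, this yields
\[
\int\eta^2 (t-1)_+^{p-1-\alpha}|D^2\boldsymbol u|^2+\int \eta^2(t-1)_+^{p-2-\alpha}\Big|\nabla\tfrac{|D\boldsymbol u|^2}{2}\Big|^2\le C,
\]
where the second term comes from $a'$ together with the positive derivative of $G^{1-\alpha}$. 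The source term is then bounded by $\|\boldsymbol f\|_{W^{1,1}}\,\|D\boldsymbol u_\varepsilon\|_\infty^{2-\alpha}$. Controlling $|\Delta_\infty\boldsymbol u|^2$ by the second term via step (iii) gives the estimate uniformly in $\varepsilon$ on $\{|D\boldsymbol u_\varepsilon|>1\}$.

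Finally, we pass to the limit. $G_\varepsilon(|D\boldsymbol u_\varepsilon|)^{\frac{p-\alpha}{2}}$ is bounded in $W^{1,2}$, so the a.e. convergence of $D\boldsymbol u_\varepsilon$ lets us identify the weak limits on $\{|D\boldsymbol u|>1\}$, where $\boldsymbol u\in W^{2,2}_{loc}$ locally. We then apply Fatou's lemma to obtain \eqref{eq:stimaseconde}.
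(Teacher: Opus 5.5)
Your final test function $\partial_s\boldsymbol u_\varepsilon\,G_\varepsilon(|D\boldsymbol u_\varepsilon|)^{1-\alpha}\eta^2$ is the paper's: its $\frac{(|D\boldsymbol u_\varepsilon|-1-\delta)_+}{(|D\boldsymbol u_\varepsilon|-1)_+^{\alpha}}\,\boldsymbol u_{\varepsilon,i}\psi^2$ is a truncation of it, and you handle the source term the same way. However, the inequality you say it yields is not what it yields, and your limit passage rests on the wrong version.

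Put $t=|D\boldsymbol u_\varepsilon|$ and $\nu=D\boldsymbol u_\varepsilon/t$, and split the linearized operator into $(p-1)(t-1)_+^{p-2}(\,\cdot:\nu)\,\nu$ plus $\frac{(t-1)_+^{p-1}}{t}$ times the projection orthogonal to $\nu$. The terms in which the derivative falls on the weight give $(1-\alpha)(p-1)(t-1)_+^{p-2-\alpha}t^{-3}|\Delta_\infty\boldsymbol u_\varepsilon|^2$ plus a nonnegative remainder. Since $\Delta_\infty\boldsymbol u_\varepsilon=t\,D\boldsymbol u_\varepsilon\nabla t$, this sees only the component $\nu\nabla t$. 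The full gradient $|\nabla t|^2=t^{-2}\big|\nabla\tfrac{|D\boldsymbol u_\varepsilon|^2}{2}\big|^2$ comes only with weight $(t-1)_+^{p-1-\alpha}$; these are exactly the two left-hand terms of Lemma~\ref{Lemma2}. The full Hessian comes only with weight $(t-1)_+^{p-\alpha}/t$.

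So both terms of your displayed inequality are one power of $(t-1)_+$ too strong, and the $W^{1,2}$ bound on $G_\varepsilon^{(p-\alpha)/2}$ does not follow. For the $\varepsilon$-level bound on $\Delta_\infty\boldsymbol u_\varepsilon$ this is harmless: the bound comes out directly and step (iii) is unnecessary. It does, however, break your compactness step.

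Relatedly, the absorption in (iv) for the decreasing weight $(t-1)_+^{-\alpha}$ overlooks the radial--radial term $\frac{a'}{t}(D\boldsymbol u:D\boldsymbol u_s)(D\boldsymbol u:\boldsymbol u_s\otimes\nabla\varphi)$. Its sum over $s$ is $a'\varphi'|\Delta_\infty\boldsymbol u|^2/t^2\approx-\alpha(p-1)(t-1)^{p-3-\alpha}|\Delta_\infty\boldsymbol u|^2$ near $t=1$, one power more singular than every good term. So the increasing weight is forced by this, not only by the source term. With it nothing has to be absorbed, and $\alpha<p-1$ enters only through the cut-off term $(t-1)_+^{p-1-\alpha}|D\boldsymbol u_\varepsilon|^2|\nabla\eta|^2$.

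The limit passage also has to be redone. Fatou's lemma does not apply, because the integrand contains $D^2\boldsymbol u_\varepsilon$, which converges at best weakly. You must identify the weak limit of the weighted $\Delta_\infty\boldsymbol u_\varepsilon$ (linear in $D^2\boldsymbol u_\varepsilon$ with $D\boldsymbol u_\varepsilon$-dependent coefficients) and use weak lower semicontinuity.

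Moreover, monotonicity does not give $D\boldsymbol u_\varepsilon\to D\boldsymbol u$ a.e. The limit operator vanishes on $\{|\xi|\le1\}$, so solutions are not unique there. For instance, with $\boldsymbol f=0$ and $\boldsymbol u$ a bump with $|D\boldsymbol u|\le1$ compactly supported in $B$, $\boldsymbol u$ is a solution while $\boldsymbol u_\varepsilon\equiv0$.

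What you do have is convergence of the truncated fields $\boldsymbol v_\varepsilon=(|D\boldsymbol u_\varepsilon|-1-\delta)_+\frac{D\boldsymbol u_\varepsilon}{|D\boldsymbol u_\varepsilon|}$, and this is exactly what the paper uses. By \cite[Lemma~3.3]{BDGP1} they converge in $C^{0,\alpha(\delta)}(\overline\Omega_\delta)$, where $\Omega_\delta=\{|D\boldsymbol u|>1+2\delta\}$. The paper then:
\begin{itemize}
\item upgrades this to Hölder convergence of $D\boldsymbol u_\varepsilon$ on $\overline\Omega_\delta$, so that $|D\boldsymbol u_\varepsilon|\ge1+\tfrac32\delta$ there;
\item gets $W^{2,2}(\Omega_\delta)$ bounds uniform in $\varepsilon$;
\item passes to the limit through the operators $T_K$ and weak lower semicontinuity;
\item lets $\delta\to0$ by monotone convergence.
\end{itemize}

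Your idea can be repaired using only a.e. convergence on $\{|D\boldsymbol u|>1\}$. By the correct bounds, $(|D\boldsymbol u_\varepsilon|-1-\delta)_+$ and $\boldsymbol v_\varepsilon$ are bounded in $W^{1,2}$. On $\{t>1+\delta\}$ one has $\Delta_\infty\boldsymbol u_\varepsilon=t\,D\boldsymbol u_\varepsilon\nabla(t-1-\delta)_+$, with a coefficient that is a continuous function of $(|D\boldsymbol u_\varepsilon|-1-\tfrac{\delta}{2})_+\frac{D\boldsymbol u_\varepsilon}{|D\boldsymbol u_\varepsilon|}$. Hence the weighted quantity converges weakly in $L^2$, and lower semicontinuity applies before letting $\delta\to0$.
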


Our second result establishes a quantitative control on the integrability of the inverse of the degenerate weight, provided the source term is bounded away from zero along at least one direction. This result is obtained, in the spirit of \cite{DS}, thanks to Theorem \ref{TeoDerivateSeconde}, and improves the result in \cite{GR}.

\begin{theorem}\label{TeoinversodelPeso}
   Under the same assumptions of Theorem \ref{TeoDerivateSeconde}, suppose additionally that for some index $k \in \{1, \dots, N\}$ we have $f^k\geq \tau>0$ in $\hat\Omega$. Then, for any $  \beta <\min\{2p-3,p-1\},$ 
the following estimate holds true 
\begin{equation}\label{eq:stimainverso}
    \int_{\tilde\Omega} \frac{1}{(|D\boldsymbol{u}|-1)_+^\beta}\,dx \leq C,
\end{equation}
where $C=C(p,n,N,\beta,\hat \Omega,\|\boldsymbol{f}\|_{L^q(\hat \Omega)},\|\boldsymbol{f}\|_{W^{1,1}(\hat \Omega)},\|D\boldsymbol{u}\|_{L^p(\hat \Omega)},\tau)$ is a positive constant. In particular, denoting with $\mathcal{L}$ the Lebesgue measure on $\R^n$, we have $$\mathcal{L}(\{x \in \tilde{\Omega} : |D\boldsymbol{u}(x)|\leq 1\})=0.$$
\end{theorem}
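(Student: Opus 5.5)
We follow the approach of \cite{DS}: test the $k$-th equation of \eqref{weakN} with a test function built from a negative power of the weight, and absorb the resulting terms using Theorem \ref{TeoDerivateSeconde}. Write $A(D\boldsymbol{u}):=(|D\boldsymbol{u}|-1)_+^{p-1}\frac{D\boldsymbol{u}}{|D\boldsymbol{u}|}$. Fix a cut-off $\varphi\in C^\infty_c(\hat\Omega)$ with $\varphi\equiv1$ on $\tilde\Omega$, and $\varepsilon>0$. In \eqref{weakN} we take $\boldsymbol{\psi}$ with only the $k$-th component nonzero,
\[
\psi^k=\frac{\varphi^2}{(\varepsilon+(|D\boldsymbol{u}|-1)_+)^{\beta}} .
\]
This is admissible by density, since it is bounded and, by Theorem \ref{TeoDerivateSeconde}, regular enough on $\{|D\boldsymbol{u}|>1\}$. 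A cleaner route is to first work with the regularized problems used in the proof of Theorem \ref{TeoDerivateSeconde} and pass to the limit at the end. The right-hand side is then bounded from below by
\[
\tau\int_{\tilde\Omega}\frac{dx}{(\varepsilon+(|D\boldsymbol{u}|-1)_+)^\beta}.
\]
By monotone convergence as $\varepsilon\to0$, it therefore suffices to bound the left-hand side uniformly in $\varepsilon$.

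Differentiating $\psi^k$ gives two contributions. The first comes from $D\varphi$ and is
\[
2\int (|D\boldsymbol{u}|-1)_+^{p-1}\frac{\langle\partial u^k,D\varphi\rangle}{|D\boldsymbol{u}|}\,\frac{\varphi}{(\varepsilon+(|D\boldsymbol{u}|-1)_+)^\beta}.
\]
Since $\beta<p-1$, it is bounded by $C\int_{\hat\Omega}(1+|D\boldsymbol{u}|^{p-1})$, which is controlled by $\|D\boldsymbol{u}\|_{L^p(\hat\Omega)}$. The second contribution, the crucial one, is
\[
-\beta\int\varphi^2\frac{(|D\boldsymbol{u}|-1)_+^{p-1}}{(\varepsilon+(|D\boldsymbol{u}|-1)_+)^{\beta+1}}\,\frac{\langle \partial u^k, D|D\boldsymbol{u}|\rangle}{|D\boldsymbol{u}|},
\]
supported in $\{|D\boldsymbol{u}|>1\}$. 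Here we use the identity $D\boldsymbol{u}\,D|D\boldsymbol{u}| = \Delta_\infty\boldsymbol{u}/|D\boldsymbol{u}|$, which is the key link: the $k$-th component of $\Delta_\infty\boldsymbol{u}$ is exactly $|D\boldsymbol{u}|\langle\partial u^k,D|D\boldsymbol{u}|\rangle$. Consequently the integrand is bounded by
\[
\varphi^2 (|D\boldsymbol{u}|-1)_+^{p-2-\beta}\,\frac{|\Delta_\infty\boldsymbol{u}|}{|D\boldsymbol{u}|^2}
\]
(using $\varepsilon+t\ge t$), and $|D\boldsymbol{u}|^{-2}\le1$ there.

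Next we apply Cauchy--Schwarz, splitting the weight as
\[
(|D\boldsymbol{u}|-1)_+^{p-2-\beta}=(|D\boldsymbol{u}|-1)_+^{\frac{p-2-\alpha}{2}}\cdot(|D\boldsymbol{u}|-1)_+^{\frac{p-2+\alpha}{2}-\beta}.
\]
The first factor pairs with $|\Delta_\infty\boldsymbol{u}|$ and is handled by \eqref{eq:stimaseconde}, which we may apply on $\operatorname{supp}\varphi$ after enlarging the sets slightly. The second factor squared equals $(|D\boldsymbol{u}|-1)_+^{p-2+\alpha-2\beta}$. We then use Young's inequality in the form
\[
\int(|D\boldsymbol{u}|-1)_+^{p-2+\alpha-2\beta}\varphi^2\le \delta\int \frac{\varphi^2}{(|D\boldsymbol{u}|-1)_+^{\beta}}+C_\delta\int_{\hat\Omega}\bigl(1+|D\boldsymbol{u}|^p\bigr).
\]
This is possible when $-\beta\le p-2+\alpha-2\beta\le p$, i.e.\ $\beta\le p-2+\alpha$, with the exponent interpolating between $-\beta$ and a nonnegative power. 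The negative part is absorbed into the left-hand side, which holds for the $\varepsilon$-regularized quantity; an alternative is to keep $\varepsilon$ in all the weights throughout. Choosing $\alpha$ close to $\min\{p-1,1\}$ makes the condition $\beta<p-2+\alpha$ read $\beta<\min\{2p-3,p-1\}$, which is exactly the stated range.

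The main obstacle is this last step: one must arrange the exponent bookkeeping so that the Cauchy--Schwarz/Young splitting closes with an absorbable term while keeping everything uniform in $\varepsilon$. One must also justify the test function, which is singular on the degenerate set; this is why we truncate with $\varepsilon$ and possibly pass through the approximating problems. Once \eqref{eq:stimainverso} holds, the measure statement is immediate, since $(|D\boldsymbol{u}|-1)_+^{-\beta}=+\infty$ on $\{|D\boldsymbol{u}|\le1\}$. Formally we use the $\varepsilon$-version: $\varepsilon^{-\beta}\mathcal{L}(\{|D\boldsymbol{u}|\le1\}\cap\tilde\Omega)\le C$ for all $\varepsilon$.
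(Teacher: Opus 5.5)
Your main estimate is the paper's. You use the same test function, nonzero only in the $k$-th component and equal to a negative power of $\varepsilon+(|D\boldsymbol u|-1)_+$ times the squared cut-off. You take the same lower bound from $f^k\ge\tau$ and identify the critical term with $(\Delta_\infty\boldsymbol u)_k$ via $\Delta_\infty\boldsymbol u=|D\boldsymbol u|\,D\boldsymbol u\,\nabla|D\boldsymbol u|$. You arrive at the same constraint $\beta<p-2+\alpha$.

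Your Cauchy--Schwarz plus interpolation step gives the same range as the paper's pointwise Young inequality with parameter $\theta$. However, it only closes in the variant you mention at the end, where $\varepsilon$ is kept in the weight. After replacing $(\varepsilon+t)^{-\beta-1}$ by $t^{-\beta-1}$, the quantity $\int\varphi^2t^{-\beta}$ you want to absorb dominates the left-hand side and is not yet known to be finite. You should also keep $\varphi^2$ on the left rather than restricting to $\tilde\Omega$.

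The genuine gap is the legitimacy of the test. Theorem~\ref{TeoDerivateSeconde} does not make $\psi^k$ admissible in \eqref{weakN}. It controls $\nabla|D\boldsymbol u|$ only through the matrix $D\boldsymbol u$, which has a kernel when $N<n$, and only inside the open set $\{|D\boldsymbol u|>1\}$. The density argument needs $\psi^k\in W^{1,1}_0$, i.e.\ weak differentiability of $(|D\boldsymbol u|-1)_+$ in all directions and across $\partial\{|D\boldsymbol u|>1\}$. This is an additional regularity input that you do not supply. It can be obtained, for instance, from the uniform local bound on $\int(|D\boldsymbol u_\varepsilon|-1)_+^{p-1-\alpha}|D^2\boldsymbol u_\varepsilon D\boldsymbol u_\varepsilon|^2|D\boldsymbol u_\varepsilon|^{-2}$ contained in \eqref{zio}, passed to the limit and combined with a truncation of the weight away from $\{|D\boldsymbol u|\le1\}$. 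With that input, your direct route would even be slightly more economical than the paper's, and the measure statement would be immediate.

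The other option you mention, working with $\boldsymbol u_\varepsilon$, is what the paper does, but it is not a formality. Write $t_\varepsilon=(|D\boldsymbol u_\varepsilon|-1)_+$. The $\varepsilon$-part of the operator contributes the term $\beta\varepsilon\int\varphi^2(\delta+t_\varepsilon)^{-\beta-1}\langle\nabla u^k_\varepsilon,D^2\boldsymbol u_\varepsilon D\boldsymbol u_\varepsilon\rangle|D\boldsymbol u_\varepsilon|^{-1}\chi_{0,\varepsilon}$. It carries no vanishing factor $t_\varepsilon^{p-1}$, so it is not covered by \eqref{stimauniforme}.

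The paper handles this term in three steps:
\begin{enumerate}
\item Young's inequality;
\item the $\varepsilon$-weighted first term of \eqref{bombolone};
\item the coupling $\delta=\varepsilon^{1/(\beta+2)}$ between the truncation and regularization parameters.
\end{enumerate}
It then returns to $\boldsymbol u$ via Fatou's lemma and the a.e.\ convergence of $(|D\boldsymbol u_\varepsilon|-1-\delta)_+$. Neither the extra regularity nor this $\varepsilon$-bookkeeping is in your proposal, so as written the argument is complete only at a formal level.
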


As a consequence of Theorems~\ref{TeoDerivateSeconde} and~\ref{TeoinversodelPeso}, we obtain the following result.

\begin{theorem}\label{Corollario2}
   Under the same assumptions of Theorem \ref{TeoDerivateSeconde}, if 
 $1<p < 3$, we have that \begin{equation}\label{corollario}
     \Delta_{\infty} \boldsymbol{u} \in L^2(\tilde \Omega\cap\{|D\boldsymbol{u}|>1\}).
 \end{equation}
If $p\geq 3$, under the assumptions of Theorem \ref{TeoinversodelPeso}, the following holds true $$\Delta_{\infty} \boldsymbol{u}\in L^q(\tilde \Omega)\quad\text{for any }  1\leq q < \frac{p-1}{p-2}.$$
\end{theorem}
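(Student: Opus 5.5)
For $1<p<3$, the plan is to remove the weight in \eqref{eq:stimaseconde} by playing off the exponent $\alpha$ against the size of $|D\boldsymbol u|$. Choose $\alpha$ with $p-2<\alpha<\min\{p-1,1\}$. This interval is nonempty because $p-2<p-1$ and $p-2<1$ when $p<3$. The exponent $p-2-\alpha$ is then negative. On $\{|D\boldsymbol u|>1\}$ we then split the domain according to whether $|D\boldsymbol u|-1\le M$ or $|D\boldsymbol u|-1>M$, with $M$ a fixed constant, say $M=1$. On the first piece $(|D\boldsymbol u|-1)^{p-2-\alpha}\ge M^{p-2-\alpha}$, so $\int|\Delta_\infty\boldsymbol u|^2$ over that piece is controlled directly by Theorem \ref{TeoDerivateSeconde}.

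The region where $|D\boldsymbol u|$ is large is the delicate part. There the weight tends to zero and gives no control, so a second ingredient is needed. I would take the local boundedness of $D\boldsymbol u$, namely the Lipschitz regularity of \cite{Brasco,BrascoAmbrogio,Passarelli}, which holds for $\boldsymbol f\in L^q$ with $q>n$ and which I assume is available in the paper. It gives $|D\boldsymbol u|\le L$ in $\tilde\Omega$, with $L$ depending on the same data. With $L$ in hand we take $M=L$, so the large-gradient piece is empty and the weight is bounded below by $(L-1)_+^{p-2-\alpha}$ (or by $1$ if $L\le 2$). This gives $\Delta_\infty\boldsymbol u\in L^2(\tilde\Omega\cap\{|D\boldsymbol u|>1\})$.

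For $p\ge3$ the weight $(|D\boldsymbol u|-1)^{p-2-\alpha}$ is positive for every admissible $\alpha$, that is $\alpha<1$. It now degenerates at $|D\boldsymbol u|=1$, and we compensate with Theorem \ref{TeoinversodelPeso}. That theorem also tells us $\{|D\boldsymbol u|\le1\}$ is null, so $\int_{\tilde\Omega}=\int_{\tilde\Omega\cap\{|D\boldsymbol u|>1\}}$. For $1\le q<2$ we apply Hölder with exponents $2/q$ and $2/(2-q)$:
\[
\int|\Delta_\infty\boldsymbol u|^q=\int (|D\boldsymbol u|-1)^{\frac{q(p-2-\alpha)}{2}}|\Delta_\infty\boldsymbol u|^q\,(|D\boldsymbol u|-1)^{-\frac{q(p-2-\alpha)}{2}}
\le C\Big(\int (|D\boldsymbol u|-1)^{-\frac{q(p-2-\alpha)}{2-q}}\Big)^{\frac{2-q}{2}} .
\]
The last integral is finite by \eqref{eq:stimainverso} if $\beta:=\frac{q(p-2-\alpha)}{2-q}<\min\{2p-3,p-1\}=p-1$, using $p\ge3$.

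It remains to check the exponents. Letting $\alpha\uparrow1$, the condition becomes $q(p-3)<(2-q)(p-1)$ up to an arbitrarily small loss, that is $q(2p-4)<2(p-1)$, or equivalently $q<\frac{p-1}{p-2}$. Since $\frac{p-1}{p-2}\le 2$ for $p\ge3$, every such $q$ is below $2$, so Hölder is legitimate. For a given $q$ we choose $\alpha<1$ close enough to $1$ that the strict inequality holds. The case $q=1$ is immediate and is included.

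I expect the main obstacle to be the large-gradient region in the case $p<3$. This is the step where an external $L^\infty$ bound on $D\boldsymbol u$ is needed. If that bound were not available, I would try instead to exploit the decay $|\Delta_\infty\boldsymbol u|\lesssim|D\boldsymbol u|^2|D^2\boldsymbol u|$ together with a separate estimate valid where the equation is uniformly elliptic.
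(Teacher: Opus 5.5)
Your proof is correct and is essentially the paper's argument. For $1<p<3$ the paper takes $\alpha=p-2$ in \eqref{eq:stimaseconde}, so the weight is identically $1$, and uses the local boundedness of $D\boldsymbol{u}$ (which the paper does assume, so your ``main obstacle'' is not one) only through the remark that the estimate persists for $\alpha\le 0$; this is equivalent to your lower bound on the weight with $\alpha\in(p-2,\min\{p-1,1\})$. For $p\ge 3$ you use the same H\"older splitting against Theorem \ref{TeoinversodelPeso} with $\alpha\uparrow 1$, and the only cosmetic difference is that the paper integrates over $\{|D\boldsymbol u|>1+\delta\}$ and lets $\delta\to0$, instead of discarding the null set $\{|D\boldsymbol u|\le 1\}$ directly.
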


\section{Notation and preliminary results}\label{seconda}
\subsection*{Notation}\label{Notazioni}
Generic fixed numerical constants will be denoted by $C$ and will be allowed to vary within a single line or formula. 

We will use the bold style to stress the vectorial nature of  different quantities. For instance, a $N$-vectorial function $\boldsymbol{u}$ defined in $\Omega$ will be written as
$
\boldsymbol{u}(x)=\left(u^1(x),\ldots,u^N(x) \right),
$
where $u^\ell$ is a scalar function defined in $\Omega$ for $\ell=1,\ldots,N$. We denote by \(D\boldsymbol{u}\) the Jacobian matrix of \(\boldsymbol{u}\):
\begin{equation*}
D {\boldsymbol u}=  
        ( \nabla u^1 
         \cdots 
         \nabla u^N)^T
        \in \R^{N\times n}, \quad \nabla u^\ell = \left(\frac{\partial u^\ell}{\partial x_1}\ldots\frac{\partial u^\ell}{\partial x_n}\right)
        \end{equation*}
for $\ell=1,\ldots,N$, and
\[\displaystyle{|D {\boldsymbol u}|=\sqrt{\sum_{\ell=1}^N\sum_{j=1}^n \left(\frac{\partial u^\ell}{\partial x_j}\right)^2}}.\] 
We also use the notations $ u_j^\ell=\dfrac{\partial u^\ell}{\partial x_j}$ and $ u_{ij}^\ell=\dfrac{\partial^2 u^\ell}{\partial x_i\partial x_j}$. In particular we denote with $\boldsymbol{u}_i:=\left(u_i^1,...,u_i^N\right)$.

Let $\{ \boldsymbol{e}^\alpha\}_{\alpha =1}^N$, and ${\{ \boldsymbol{e}_j\}}^n_{j =1}$ be the canonical basis of $\mathbb{R}^N$ and $\mathbb{R}^n$, respectively. We will also denote a second-order tensor of size $N\times n$ as
$\eta = \eta_j^\alpha \ \boldsymbol{e}^\alpha \otimes \boldsymbol{e}_j$,
and a third-order tensor of size $N\times n \times n$ as
$    \xi = \xi_{ij}^\alpha \ \boldsymbol{e}^\alpha \otimes \boldsymbol{e}_j \otimes \boldsymbol{e}_i$.
In particular, by $D^2\boldsymbol{u}$ we denote 
 $D^2 \boldsymbol{u} = u_{ij}^\alpha \ \boldsymbol{e}^\alpha \otimes \boldsymbol{e}_j \otimes\boldsymbol{e}_i$,
and we define its norm as follows
\[\|D^2 {\boldsymbol u} \|=\sqrt{\sum_{\alpha=1}^N\sum_{i,j}(u^{\alpha}_{ij})^2}.\]

We now define the vector $D^2\boldsymbol{u}D\boldsymbol{u}$ as the application of a third-order tensor to a second-order one, namely
\begin{equation}\label{eq:vect}
D^2\boldsymbol{u}D\boldsymbol{u}:=\left(D\boldsymbol{u}:D\boldsymbol{u}_{1}\,\,
\cdots\,\, D\boldsymbol{u}:D\boldsymbol{u}_{i}\,\,\cdots\,\,
D\boldsymbol{u}:D\boldsymbol{u}_{n}
\right)^T,
\end{equation}
where $\boldsymbol{u}_i=\partial_{x_i}\boldsymbol{u}$ and \(D\boldsymbol{u}_i \in \mathbb{R}^{N \times n}\) denotes the following matrix

\[
D\boldsymbol{u}_i :=\left(\nabla u^1_i
\,\,\cdots 
\nabla u^N_i\right)^T,
\]
for a fixed \(i \in \{1,\dots,n\}\). In the following we shall often use the inequality
\begin{equation}\label{eq:FraCo}
|D^2\boldsymbol u D\boldsymbol u|\leq |D\boldsymbol{u}|\|D^2\boldsymbol{u}\|.
\end{equation}

\subsection{Regularized problem}
Here and in the following we denote by \(\boldsymbol{u} \in W^{1,p}_{loc}(\Omega)\) a weak solution of \eqref{eq:Equazione_Principale}. By \cite{Passarelli}, we know that weak solutions of \eqref{eq:Equazione_Principale} belong to  \(W^{1,\infty}_{\mathrm{loc}}(\Omega)\), see \cite{Brasco,BrascoAmbrogio} for the scalar case. As a consequence, we may always assume that \(D\boldsymbol{u}\) is locally bounded in \(\Omega\).

At this stage we introduce the regularised problem and describe several of its properties, referring to \cite{BDGP1,BDGP2} for a more comprehensive discussion. For every \(\varepsilon \in (0,1]\), we introduce the  function
$$h_\varepsilon(t) := \frac{(t-1)_+^{p-1}}{t} + \varepsilon, \qquad t \in [0,\infty).$$
 Note that \(h_\varepsilon\) never vanishes, since \(h_\varepsilon(t) \equiv \varepsilon\) on the whole interval \([0,1]\). Concerning its regularity, we observe that if \(p>2\) then \(h_\varepsilon \in C^{1}([0,\infty))\) whereas if \(p \leq 2\) we have \(h_\varepsilon \in W^{1,1}((0,\infty) \cap C^{1}([0,1)) \cap (1,\infty))\). In the borderline case \(p=2\) the regularity improves to \(h_\varepsilon \in W^{1,\infty}([0,\infty))\). Based on this regularisation, we define the associated vector field
$$\mathcal{A}_\varepsilon(\xi) := h_\varepsilon(|\xi|)\,\xi, \qquad \xi \in \mathbb{R}^{Nn}.$$

Let $x_0\in \Omega$ and  $B_{R}:=B_{R}(x_0)\subset\subset\Omega$, where $B_{R}(x_0)$ denotes the open ball of radius $R$ centered at $x_0$. By \(\boldsymbol{u}_\varepsilon -\boldsymbol{u}\in   W^{1,\mathfrak{p}}_{0}(B_R;\mathbb{R}^N)\), where \(\mathfrak{p} = \max\{2,p\}\), we denote the unique weak solution of the regularized elliptic system.
\begin{equation} \label{eq:problregol}
		\begin{cases}
			-\operatorname{\bf div}\left( \mathcal{A}_\varepsilon(D\boldsymbol{u}_\varepsilon) \right) = \boldsymbol{f}(x) & \text{in } B_{R} \\
			\boldsymbol{u}_\varepsilon = \boldsymbol{u} & \text{on } \partial B_{R}.
\end{cases}
	\end{equation}
	 
The weak formulation of the previous problem is \begin{equation}\label{eq:deboleregolar}
    \begin{split}
        &\int_{B_{R}} \varepsilon (D\boldsymbol{u}_\varepsilon:D\boldsymbol{\zeta})+(|D\boldsymbol{u}_\varepsilon|-1)_+^{p-1}\left(\frac{D\boldsymbol{u}_\varepsilon}{|D\boldsymbol{u}_\varepsilon|}:D\boldsymbol{\zeta} \right)\,dx\\&=\int_{B_{R}} \langle \boldsymbol{f},\boldsymbol{\zeta}\rangle \,dx,\quad \forall  \boldsymbol{\zeta} \in C^\infty_c(B_{R}).
\end{split}
    \end{equation}
    
Note that \(\boldsymbol{u}_\varepsilon \in W^{1,\infty}_{\mathrm{loc}}(B_R;\mathbb{R}^N) \cap W^{2,2}_{\mathrm{loc}}(B_R;\mathbb{R}^N)\). This follows by arguments analogous to those in \cite{BDGP1,Marcello}. This regularity will allow us to carry out the computations appearing in the subsequent sections.

\section{Second-order estimates}

This section is devoted to the proof of Theorem \ref{TeoDerivateSeconde}.
To begin with, we introduce the linearized equation. Note that, when \(p<2\), the operator becomes singular in the region \(|D\boldsymbol{u}_\varepsilon|\leq 1\). Below (see \eqref{testphi}), we shall introduce a suitable cut-off function to stay well separated from the set \(\{\,|D\boldsymbol{u}_\varepsilon|\leq 1\,\}\). In order to do so, we note that the set \(\{\,|D\boldsymbol{u}_\varepsilon|\leq 1\,\}\) is closed, since the function \(|D\boldsymbol{u}_\varepsilon|\) is Hölder continuous in the set \(\{\,|D\boldsymbol{u}_\varepsilon|> 1\,\}\), if $\boldsymbol{f}\in L^q(B_{R})$, with $q>n$; see \cite[Theorem~3.6]{BDGP1}. With this in mind, we consider the following function $\boldsymbol{\zeta}:=\boldsymbol{\varphi}_i,$ with $\boldsymbol\varphi\in C_c^\infty\big(B_{R}\setminus\{|D\boldsymbol u_\varepsilon|\leq 1\}\big)$ and $i\in \{1,...,n\}$ fixed, and we test the equation \eqref{eq:deboleregolar} with $\boldsymbol{\zeta}$, obtaining
\begin{equation}\label{primoregolarizzato}
    \begin{split}
        &\int_{B_{R}} \varepsilon (D\boldsymbol{u}_{\varepsilon,i}:D\boldsymbol{\varphi})+\int_{B_{R}}(p-1)(|{D\boldsymbol{u}_\varepsilon}|-1)_+^{p-2}\left({D\boldsymbol{u}_{\varepsilon,i}}:\frac{{D\boldsymbol{u}_\varepsilon}}{|{D\boldsymbol{u}_\varepsilon}|} \right)\left(\frac{{D\boldsymbol{u}_\varepsilon}}{|{D\boldsymbol{u}_\varepsilon}|}:{D\boldsymbol \varphi} \right)\,dx
        \\&+\int_{B_{R}}(|{D\boldsymbol{u}_\varepsilon}|-1)_+^{p-1}\left(\frac{{D\boldsymbol{u}_{\varepsilon,i}}}{|{D\boldsymbol{u}_\varepsilon}|}:{D\boldsymbol \varphi} \right)\,dx
        \\&-\int_{B_{R}}(|{D\boldsymbol{u}_\varepsilon}|-1)_+^{p-1}\left(\frac{{D\boldsymbol{u}_{\varepsilon,i}}}{|{D\boldsymbol{u}_\varepsilon}|}:\frac{{D\boldsymbol{u}_\varepsilon}}{|{D\boldsymbol{u}_\varepsilon}|} \right)\left(\frac{{D\boldsymbol{u}_\varepsilon}}{|{D\boldsymbol{u}_\varepsilon}|}:{D\boldsymbol \varphi} \right)\,dx\\&=\int_{B_{R}} \langle \boldsymbol{f}_{i},\boldsymbol{\varphi}\rangle \,dx,\quad \forall  \boldsymbol{\varphi} \in C^\infty_c\big(B_{R}\setminus\{|D\boldsymbol u_\varepsilon|\leq 1\}\big).
\end{split}
    \end{equation}

Now we introduce some auxiliary functions that will be useful in the sequel. For $0<2\tilde R<R$, we consider a cut-off function ${\psi}_{\tilde R} := {\psi} \in C^{\infty}_{c}(B_{2\tilde R}(x_0))$ such that:
    \begin{equation} \label{eq:varphi}
        \begin{cases}
        {\psi} = 1 & \text{in} \quad B_{\tilde R}(x_0)\\
        |\nabla\psi| \leq \frac{2}{\tilde R} & \text{in} \quad B_{2\tilde R}(x_0)\setminus B_{\tilde R}(x_0).\\
    \end{cases}
\end{equation}
 
 Now, for $\alpha\in\mathbb{R}$ and $\delta>0$,  let us define

 \begin{equation}\label{testphi}
     \boldsymbol{\varphi}:= \frac{(|D\boldsymbol u_\varepsilon|-1-\delta)_+}{(|D\boldsymbol u_\varepsilon|-1)_+^\alpha}{{\boldsymbol u}_{\varepsilon,i}}{\psi}^2.
\end{equation}
We note that \(\boldsymbol{\varphi}\in W^{1,2}_0(B_{R})\) is an admissible test function, see \cite[Chapter 5]{Evans}, and by a standard density argument it can be plugged in \eqref{primoregolarizzato}. In particular, we have \begin{equation}\label{gradientebombolo}
\begin{split}
     {D\boldsymbol \varphi}&=\left(1-\alpha\frac{(|D\boldsymbol u_\varepsilon|-1-\delta)_+}{(|D\boldsymbol u_\varepsilon|-1)_+}\right)\frac{1}{(|D\boldsymbol u_\varepsilon|-1)_+^\alpha}{\psi}^2\frac{{{\boldsymbol u}_{\varepsilon,i}}\otimes{D^2\boldsymbol{u}_\varepsilon}{D\boldsymbol{u}_\varepsilon}}{|{D\boldsymbol{u}_\varepsilon}|}\chi_{\delta,\varepsilon}
     \\&+\frac{(|D\boldsymbol u_\varepsilon|-1-\delta)_+}{(|D\boldsymbol u_\varepsilon|-1)_+^\alpha}{\psi}^2{D\boldsymbol{u}_{\varepsilon,i}}+2\frac{(|D\boldsymbol u_\varepsilon|-1-\delta)_+}{(|D\boldsymbol u_\varepsilon|-1)_+^\alpha}{\psi} ({{\boldsymbol u}_{\varepsilon,i}}\otimes\nabla\psi),
     \end{split}
\end{equation}
where \(\chi_{\delta,\varepsilon}\) denotes the characteristic function of the set \(\{\,|D\boldsymbol{u}_\varepsilon| > 1+\delta\,\}\), for any $\delta \geq 0$. 

\begin{lemma}\label{Lemma11}
   Let \(\Omega\) be an open set in \(\mathbb{R}^n\) and assume that \(B_{R}\subset\subset \Omega\). Let \(\boldsymbol{u}_\varepsilon\) be a weak solution of \eqref{eq:problregol} and let \(\boldsymbol{\varphi}\) be the function defined in \eqref{testphi}. Then, for $0<\alpha<1$, the following inequality holds 

\begin{equation}\label{Stimasulprimoregolarizzato}
    \begin{split}
          &\int_{B_{R}} {(|D\boldsymbol u_\varepsilon|-1)_+^{-\alpha}}\left|\frac{{D^2\boldsymbol{u}_\varepsilon}{D\boldsymbol{u}_\varepsilon}}{|{D\boldsymbol{u}_\varepsilon}|}\right|^2|{D\boldsymbol{u}_\varepsilon}|{\psi}^2\chi_{0,\varepsilon}\,dx
          +\int_{B_{R}} {(|D\boldsymbol u_\varepsilon|-1)_+^{1-\alpha}}\|{D^2\boldsymbol{u}_\varepsilon}\|^2{\psi}^2\,dx
         \\&\leq C(\alpha)\int_{B_{R}} {(|D\boldsymbol u_\varepsilon|-1)_+^{1-\alpha}}|{D\boldsymbol{u}_\varepsilon}|^2|\nabla\psi|^2\,dx
         +C(\alpha)\sum_{i=1}^n\int_{B_{R}}  (D\boldsymbol{u}_{\varepsilon,i}:D\boldsymbol{\varphi})\,dx,
    \end{split}
\end{equation}
where $C(\alpha)$ is a positive constant.
\end{lemma}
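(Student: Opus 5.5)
The plan is to prove \eqref{Stimasulprimoregolarizzato} by a direct pointwise computation of $\sum_{i=1}^n D\boldsymbol u_{\varepsilon,i}:D\boldsymbol\varphi$, using the explicit expression \eqref{gradientebombolo} for $D\boldsymbol\varphi$. The equation \eqref{primoregolarizzato} is not needed at this stage: the lemma only isolates the good terms produced by the $\varepsilon$-Laplacian part of the linearized operator. Write $w:=(|D\boldsymbol u_\varepsilon|-1)_+$ and $w_\delta:=(|D\boldsymbol u_\varepsilon|-1-\delta)_+$. I will first prove the estimate for fixed $\delta>0$, with $w_\delta w^{-\alpha}$ and $\chi_{\delta,\varepsilon}$ in place of $w^{1-\alpha}$ and $\chi_{0,\varepsilon}$ on the left, and then let $\delta\to0$.

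The key algebraic identity is that, for any vector $\boldsymbol v\in\R^n$,
$$\sum_{i=1}^n D\boldsymbol u_{\varepsilon,i}:(\boldsymbol u_{\varepsilon,i}\otimes \boldsymbol v)=\sum_{i,j}\sum_{l} u^l_{\varepsilon,ij}\,u^l_{\varepsilon,i}\,v_j=\langle D^2\boldsymbol u_\varepsilon D\boldsymbol u_\varepsilon,\boldsymbol v\rangle,$$
which follows from the symmetry of second derivatives and definition \eqref{eq:vect}. I apply it to the three terms of \eqref{gradientebombolo}. The first term, with $\boldsymbol v=D^2\boldsymbol u_\varepsilon D\boldsymbol u_\varepsilon/|D\boldsymbol u_\varepsilon|$, gives
$$\Big(1-\alpha\tfrac{w_\delta}{w}\Big)w^{-\alpha}\psi^2\Big|\tfrac{D^2\boldsymbol u_\varepsilon D\boldsymbol u_\varepsilon}{|D\boldsymbol u_\varepsilon|}\Big|^2|D\boldsymbol u_\varepsilon|\chi_{\delta,\varepsilon}.$$
Since $0\le w_\delta/w\le1$ and $\alpha<1$, this is bounded below by $(1-\alpha)$ times the integrand of the first term on the left-hand side, with $\chi_{\delta,\varepsilon}$ instead of $\chi_{0,\varepsilon}$. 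The second term gives exactly $w_\delta w^{-\alpha}\psi^2\|D^2\boldsymbol u_\varepsilon\|^2$. The third term, with $\boldsymbol v=\nabla\psi$, equals $2w_\delta w^{-\alpha}\psi\langle D^2\boldsymbol u_\varepsilon D\boldsymbol u_\varepsilon,\nabla\psi\rangle$. Using \eqref{eq:FraCo} and Young's inequality, its absolute value is at most
$$\tfrac12 w_\delta w^{-\alpha}\psi^2\|D^2\boldsymbol u_\varepsilon\|^2+2w_\delta w^{-\alpha}|D\boldsymbol u_\varepsilon|^2|\nabla\psi|^2 .$$
Moreover $w_\delta w^{-\alpha}\le w^{1-\alpha}$. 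Integrating, absorbing the $\tfrac12$-term into the second term, and dividing by $\min\{1-\alpha,\tfrac12\}$ gives the $\delta$-version of \eqref{Stimasulprimoregolarizzato} with $C(\alpha)=2/\min\{1-\alpha,\tfrac12\}$. All integrals are finite for fixed $\delta$: $\boldsymbol u_\varepsilon\in W^{2,2}_{loc}\cap W^{1,\infty}_{loc}$, $w^{-\alpha}\le\delta^{-\alpha}$ on $\{w_\delta>0\}$, and the first term of $D\boldsymbol\varphi$ vanishes off $\{|D\boldsymbol u_\varepsilon|>1+\delta\}$.

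The main obstacle is the limit $\delta\to0$, since the integrability of $w^{-\alpha}|D^2\boldsymbol u_\varepsilon D\boldsymbol u_\varepsilon|^2$ near $\{|D\boldsymbol u_\varepsilon|=1\}$ is not known a priori. I would handle it as follows. On the left, $w_\delta w^{-\alpha}\uparrow w^{1-\alpha}$ and $\chi_{\delta,\varepsilon}\uparrow\chi_{0,\varepsilon}$, so Fatou's lemma (or monotone convergence) applies. The first right-hand term does not depend on $\delta$ after the bound $w_\delta\le w$. For the last term, which depends on $\delta$ through $\boldsymbol\varphi$, the identity above shows that it splits into nonnegative pieces plus the $\nabla\psi$ piece. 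The $\nabla\psi$ piece converges by dominated convergence, being bounded by $w^{1-\alpha}|D\boldsymbol u_\varepsilon|\|D^2\boldsymbol u_\varepsilon\||\nabla\psi|\in L^1$. Hence the inequality passes to the limit, with the last term understood as $\liminf_{\delta\to0}$, or equivalently read for each fixed $\delta$, which is how it will be combined with \eqref{primoregolarizzato} afterwards. This yields \eqref{Stimasulprimoregolarizzato}.
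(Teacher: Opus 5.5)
Your proposal is correct and follows the paper's proof essentially step by step. It uses the same three-term splitting of $D\boldsymbol\varphi$ from \eqref{gradientebombolo}, the same contraction identities for the $\mathcal J_1$ and $\mathcal J_2$ pieces, Young's inequality on the $\nabla\psi$ piece, and the bound $(|D\boldsymbol u_\varepsilon|-1-\delta)_+\le(|D\boldsymbol u_\varepsilon|-1)_+$, before letting $\delta\to0$. Your handling of that limit (monotone convergence on the left, with the $\boldsymbol\varphi$-term read as a $\liminf_{\delta\to0}$) is just a more explicit version of the paper's one-line ``letting $\delta\to0$''.
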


\begin{proof}
 Recalling \eqref{gradientebombolo}, we set
\begin{equation}\label{gradientetestphi}
\begin{split}
     {D\boldsymbol \varphi}:=\mathcal{J}_1+\mathcal{J}_2+\mathcal{J}_3,
     \end{split}
\end{equation}

which yields

\begin{equation}\label{robe}
    \int_{B_{R}}  (D\boldsymbol{u}_{\varepsilon,i}:D\boldsymbol{\varphi})=\int_{B_{R}}  (D\boldsymbol{u}_{\varepsilon,i}:\mathcal{J}_1+\mathcal{J}_2+\mathcal{J}_3).
\end{equation}
We now turn to the estimate of the right-hand side of \eqref{robe}. More precisely, the contributions coming from \({\mathcal{J}}_1\) and \({\mathcal{J}}_2\) will be bounded from below, whereas those arising from \({\mathcal{J}}_3\) will be bounded from above.

Considering only the terms involving $\mathcal{J}_1$ in the right-hand side of \eqref{robe}, we have
\begin{equation*}
    \begin{split}
        &\int_{B_{R}} \left(1-\alpha\frac{(|D\boldsymbol u_\varepsilon|-1-\delta)_+}{(|D\boldsymbol u_\varepsilon|-1)_+}\right)\frac{1}{(|D\boldsymbol u_\varepsilon|-1)_+^\alpha}\left({{D\boldsymbol{u}_{\varepsilon,i}}}:\frac{{\boldsymbol u}_{\varepsilon,i}\otimes{D^2\boldsymbol{u}_\varepsilon}{D\boldsymbol{u}_\varepsilon}}{|{D\boldsymbol{u}_\varepsilon}|}\right){\psi}^2\chi_{\delta,\varepsilon}\,dx.
\end{split}
\end{equation*}
 Recalling \eqref{eq:vect} and denoting by \((D^{2}\boldsymbol{u}_\varepsilon\, D\boldsymbol{u}_\varepsilon)_k\) the \(k\)-th component of the corresponding vector, taking the sum over \(i=1,\ldots,n\) and observing that
\begin{equation}\label{stima1}
    \begin{split}
        &\sum_i \left({{D\boldsymbol{u}_{\varepsilon,i}}}:{\boldsymbol u}_{\varepsilon,i}\otimes{D^2\boldsymbol{u}_\varepsilon}{D\boldsymbol{u}_\varepsilon}\right)=\sum_{i,k}\sum_l{u}^l_{\varepsilon,ki}{u}^l_{\varepsilon,i}({D^2\boldsymbol{u}_\varepsilon}{D\boldsymbol{u}_\varepsilon})_k
        \\&=\sum_k ({D^2\boldsymbol{u}_\varepsilon}{D\boldsymbol{u}_\varepsilon})_k^2=|{D^2\boldsymbol{u}_\varepsilon}{D\boldsymbol{u}_\varepsilon}|^2,
    \end{split}
\end{equation} 
we obtain 
\begin{equation*}
    \begin{split}
        &\int_{B_{R}} \left(1-\alpha\frac{(|D\boldsymbol u_\varepsilon|-1-\delta)_+}{(|D\boldsymbol u_\varepsilon|-1)_+}\right)\frac{1}{(|D\boldsymbol u_\varepsilon|-1)_+^\alpha}\left|\frac{{D^2\boldsymbol{u}_\varepsilon}{D\boldsymbol{u}_\varepsilon}}{|{D\boldsymbol{u}_\varepsilon}|}\right|^2|{D\boldsymbol{u}_\varepsilon}|{\psi}^2\chi_{\delta,\varepsilon}\,dx.
\end{split}
\end{equation*}
Since $(|D\boldsymbol u_\varepsilon|-1-\delta)_+\leq (|D\boldsymbol u_\varepsilon|-1)_+$ and $\alpha>0$, the contribution coming from $\mathcal J_1$ is estimated from below by
\begin{equation}\label{primotermine}
    \begin{split}
        &(1-\alpha)\int_{B_{R}} {(|D\boldsymbol u_\varepsilon|-1)_+^{-\alpha}}\left|\frac{{D^2\boldsymbol{u}_\varepsilon}{D\boldsymbol{u}_\varepsilon}}{|{D\boldsymbol{u}_\varepsilon}|}\right|^2|{D\boldsymbol{u}_\varepsilon}|{\psi}^2\chi_{\delta,\varepsilon}\,dx
        \end{split}
\end{equation}

The term involving $\mathcal{J}_2$ in \eqref{robe} equals
\begin{equation*}\label{intermedia}
    \begin{split}
         &\int_{B_{R}} {\frac{(|D\boldsymbol u_\varepsilon|-1-\delta)_+}{(|D\boldsymbol u_\varepsilon|-1)_+^\alpha}}\left({D\boldsymbol{u}_{\varepsilon,i}}:D{{\boldsymbol u}_{\varepsilon,i}}\right){\psi}^2\,dx.
\end{split}
\end{equation*} 

Taking the sum with respect to $i = 1,\ldots,n$, we obtain
\begin{equation}\label{secondotermine}
    \begin{split}
         &\int_{B_{R}} {\frac{(|D\boldsymbol u_\varepsilon|-1-\delta)_+}{(|D\boldsymbol u_\varepsilon|-1)_+^\alpha}}\left\|{D^2\boldsymbol u_\varepsilon}\right\|^2{\psi}^2\,dx.
\end{split}
\end{equation}
 
We conclude with the terms involving $\mathcal{J}_3$. In particular, after summing over $i=1,...,n$, and noting that 
\begin{equation}\label{cosebelle}
\begin{split}
&\sum_i ({D\boldsymbol{u}_{\varepsilon,i}}:({{\boldsymbol u}_{\varepsilon,i}}\otimes\nabla\psi))=\sum_{i,k}\sum_lu^l_{\varepsilon,ik}u^l_{\varepsilon,i}\psi_k
\\&=\langle D^2\boldsymbol{u}_\varepsilon D\boldsymbol{u}_\varepsilon,\nabla \psi\rangle\leq |D^2\boldsymbol{u}_\varepsilon D\boldsymbol{u}_\varepsilon||\nabla \psi|,
    \end{split}
\end{equation}
we infer that 
\begin{equation*}
    \begin{split}
\int_{B_{R}}\left(D\boldsymbol{u}_{\varepsilon,i}:\mathcal{J}_3\right)\,dx&= 2\int_{B_{R}} {\frac{(|D\boldsymbol u_\varepsilon|-1-\delta)_+}{(|D\boldsymbol u_\varepsilon|-1)_+^\alpha}}({D\boldsymbol{u}_{\varepsilon,i}}:({{\boldsymbol u}_{\varepsilon,i}}\otimes\nabla\psi)){\psi}\,dx
\\&\leq  2\int_{B_{R}} {\frac{(|D\boldsymbol u_\varepsilon|-1-\delta)_+}{(|D\boldsymbol u_\varepsilon|-1)_+^\alpha}}\left|{D^2\boldsymbol u_\varepsilon} {D\boldsymbol{u}_\varepsilon}\right||\nabla\psi|{\psi}\,dx:=\hat{\mathcal{I}}.
\end{split}
\end{equation*}
By weighted Young's inequality with exponents $(1/2,1/2)$, we have
\begin{equation}\label{termineJ_4}
    \begin{split}
         \hat{\mathcal{I}}&\leq\hat \varepsilon \int_{B_{R}} {\frac{(|D\boldsymbol u_\varepsilon|-1-\delta)_+}{(|D\boldsymbol u_\varepsilon|-1)_+^\alpha}}\|{D^2\boldsymbol{u}_\varepsilon}\|^2{\psi}^2\,dx
         +C(\hat \varepsilon)\int_{B_{R}} {\frac{(|D\boldsymbol u_\varepsilon|-1-\delta)_+}{(|D\boldsymbol u_\varepsilon|-1)_+^\alpha}}|{D\boldsymbol{u}_\varepsilon}|^2|\nabla\psi|^2\,dx.
\end{split}
\end{equation}

Now collecting \eqref{primotermine}, \eqref{secondotermine},  \eqref{termineJ_4}, and, by plugging these into \eqref{robe}, we get
\begin{equation*}
    \begin{split}
         (1-\alpha) &\int_{B_{R}} {(|{D\boldsymbol{u}_\varepsilon}|-1)_{+}^{-\alpha}}\left|\frac{{D^2\boldsymbol{u}_\varepsilon}{D\boldsymbol{u}_\varepsilon}}{|{D\boldsymbol{u}_\varepsilon}|}\right|^2|{D\boldsymbol{u}_\varepsilon}|{\psi}^2\chi_{\delta,\varepsilon}\,dx
         +(1-\hat\varepsilon)\int_{B_{R}} {\frac{(|D\boldsymbol u_\varepsilon|-1-\delta)_+}{(|D\boldsymbol u_\varepsilon|-1)_+^\alpha}}\|{D^2\boldsymbol{u}_\varepsilon}\|^2{\psi}^2\,dx
         \\&\leq C(\hat \varepsilon)\int_{B_{R}} {\frac{(|D\boldsymbol u_\varepsilon|-1-\delta)_+}{(|D\boldsymbol u_\varepsilon|-1)_+^\alpha}}|{D\boldsymbol{u}_\varepsilon}|^2|\nabla\psi|^2\,dx
         +\sum_{i=1}^n\int_{B_{R}}  (D\boldsymbol{u}_{\varepsilon,i}:D\boldsymbol{\varphi})\,dx.
\end{split}
\end{equation*}
Using the fact that \((|D\boldsymbol{u}_\varepsilon|-1-\delta)_+ \leq (|D\boldsymbol{u}_\varepsilon|-1)_+\) in the right-hand side of the previous inequality, we get
\begin{equation*}
    \begin{split}
         (1-\alpha) &\int_{B_{R}} {(|{D\boldsymbol{u}_\varepsilon}|-1)_+^{-\alpha}}\left|\frac{{D^2\boldsymbol{u}_\varepsilon}{D\boldsymbol{u}_\varepsilon}}{|{D\boldsymbol{u}_\varepsilon}|}\right|^2|{D\boldsymbol{u}_\varepsilon}|{\psi}^2\chi_{\delta,\varepsilon}\,dx
         +(1-\hat\varepsilon)\int_{B_{R}} {\frac{(|D\boldsymbol u_\varepsilon|-1-\delta)_+}{(|D\boldsymbol u_\varepsilon|-1)_+^\alpha}}\|{D^2\boldsymbol{u}_\varepsilon}\|^2{\psi}^2\,dx
         \\&\leq C(\hat \varepsilon)\int_{B_{R}} {(|D\boldsymbol u_\varepsilon|-1)_+^{1-\alpha}}|{D\boldsymbol{u}_\varepsilon}|^2|\nabla\psi|^2\,dx
         +\sum_{i=1}^n\int_{B_{R}}  (D\boldsymbol{u}_{\varepsilon,i}:D\boldsymbol{\varphi})\,dx.
\end{split}
\end{equation*}
Letting \(\delta \to 0\), we obtain the desired conclusion.

\end{proof}

\begin{lemma}\label{Lemma2}
  Let \(\Omega\) be an open set in \(\mathbb{R}^n\) and assume that \(B_{R}\subset\subset \Omega\). Let \(\boldsymbol{u}_\varepsilon\) be a weak solution of \eqref{eq:problregol} and let \(\boldsymbol{\varphi}\) be the function defined in \eqref{testphi}. Then, for $0<\alpha<\min\{p-1,1\}$, the following inequality holds
\begin{equation}\label{Lemma22}
    \begin{split}
    &\int_{B_{R}}(|{D\boldsymbol{u}_\varepsilon}|-1)_+^{p-2-\alpha}\left| \frac{\Delta_{\infty}\boldsymbol{u}_\varepsilon}{|{D\boldsymbol{u}_\varepsilon}|^2} \right|^2|{D\boldsymbol{u}_\varepsilon}|{\psi}^2\chi_{0,\varepsilon}\,dx
    \\&+\int_{B_{R}}(|{D\boldsymbol{u}_\varepsilon}|-1)_+^{p-1-\alpha}\left|\frac{{D^2\boldsymbol u_\varepsilon}{D\boldsymbol{u}_\varepsilon}}{|{D\boldsymbol{u}_\varepsilon}|} \right|^2{\psi}^2\,dx
    \\&\leq C(p,\alpha)\int_{B_{R}}(|{D\boldsymbol{u}_\varepsilon}|-1)_+^{p-1-\alpha}|{D\boldsymbol{u}_\varepsilon}|^2|\nabla\psi|^2\,dx
\\&+C(p,\alpha)\sum_{i=1}^n\int_{B_{R}}\partial_{x_i}\left((|D\boldsymbol{u}_\varepsilon|-1)_+^{p-1}\frac{D\boldsymbol{u}_\varepsilon}{|D\boldsymbol{u}_\varepsilon|}\right):D\boldsymbol{\varphi} \,dx
    \end{split}
\end{equation}
where $C(p,\alpha)$ is a positive constant.
\end{lemma}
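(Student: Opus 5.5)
The plan is to repeat the scheme of Lemma~\ref{Lemma11}, with the Laplacian-type pairing replaced by the pairing with the derivative of the nonlinear flux. Write $t:=|D\boldsymbol u_\varepsilon|$, $V:=D^2\boldsymbol u_\varepsilon D\boldsymbol u_\varepsilon$ (so that $\Delta_\infty\boldsymbol u_\varepsilon=D\boldsymbol u_\varepsilon V$), and $w:=(t-1-\delta)_+(t-1)_+^{-\alpha}$. On the support of $D\boldsymbol\varphi$ we have $t>1+\delta$, so no singularity occurs. There
\begin{equation*}
\partial_{x_i}\Big((t-1)_+^{p-1}\frac{D\boldsymbol u_\varepsilon}{t}\Big)=(p-1)(t-1)^{p-2}\frac{V_i}{t}\frac{D\boldsymbol u_\varepsilon}{t}+(t-1)^{p-1}\Big(\frac{D\boldsymbol u_{\varepsilon,i}}{t}-\frac{V_i}{t^2}\frac{D\boldsymbol u_\varepsilon}{t}\Big),
\end{equation*}
since $D\boldsymbol u_{\varepsilon,i}:D\boldsymbol u_\varepsilon=V_i$. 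I split $D\boldsymbol\varphi=\mathcal J_1+\mathcal J_2+\mathcal J_3$ as in \eqref{gradientetestphi}, sum over $i$, bound the $\mathcal J_1,\mathcal J_2$ contributions from below, and bound the $\mathcal J_3$ contribution from above.

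\emph{$\mathcal J_2$ term.} Summing over $i$ gives
\begin{equation*}
(p-1)(t-1)^{p-2}w\,\frac{|V|^2}{t^2}\psi^2+(t-1)^{p-1}\frac{w}{t}\Big(\|D^2\boldsymbol u_\varepsilon\|^2-\frac{|V|^2}{t^2}\Big)\psi^2 .
\end{equation*}
The second summand is nonnegative by \eqref{eq:FraCo}, so I drop it. Since $w\to(t-1)_+^{1-\alpha}$ as $\delta\to0$, the first summand produces the second integral on the left of \eqref{Lemma22}.

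\emph{$\mathcal J_1$ term.} Here $\mathcal J_1=g\,\boldsymbol u_{\varepsilon,i}\otimes V$ with $g=\big(1-\alpha w/(t-1)\big)(t-1)^{-\alpha}\psi^2\chi_{\delta,\varepsilon}/t$. I need two identities. The first is $\sum_i (D\boldsymbol u_{\varepsilon,i}:\boldsymbol u_{\varepsilon,i}\otimes V)=|V|^2$, which is \eqref{stima1}. The second is
\begin{equation*}
\sum_i V_i\,\big(D\boldsymbol u_\varepsilon:\boldsymbol u_{\varepsilon,i}\otimes V\big)=\langle V,(D\boldsymbol u_\varepsilon)^TD\boldsymbol u_\varepsilon V\rangle=|\Delta_\infty\boldsymbol u_\varepsilon|^2 .
\end{equation*}
With these the $\mathcal J_1$ contribution equals
\begin{equation*}
g\Big[(p-1)(t-1)^{p-2}\frac{|\Delta_\infty\boldsymbol u_\varepsilon|^2}{t^2}+(t-1)^{p-1}\Big(\frac{|V|^2}{t}-\frac{|\Delta_\infty \boldsymbol u_\varepsilon|^2}{t^3}\Big)\Big].
\end{equation*}
The bracket in the second summand is nonnegative because $|D\boldsymbol u_\varepsilon V|\le t|V|$. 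As in Lemma~\ref{Lemma11}, $1-\alpha w/(t-1)\ge 1-\alpha>0$. Hence this term is bounded below by
\begin{equation*}
(1-\alpha)(p-1)(t-1)^{p-2-\alpha}\Big|\frac{\Delta_\infty\boldsymbol u_\varepsilon}{t^2}\Big|^2t\,\psi^2\chi_{\delta,\varepsilon},
\end{equation*}
which becomes the first integral on the left of \eqref{Lemma22} after $\delta\to0$.

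\emph{$\mathcal J_3$ term (main obstacle).} The difficulty is absorbing this term: the only coercive quantity in $V$ is $(t-1)^{p-2}w|V|^2/t^2$, while the $\mathcal J_3$ pieces carry the higher power $(t-1)^{p-1}$. Summing over $i$ as in \eqref{cosebelle}, each piece is bounded by
\begin{equation*}
C(p)\big[(t-1)^{p-2}+(t-1)^{p-1}t^{-1}\big]\,w\,|V|\,|\nabla\psi|\,\psi ,
\end{equation*}
because $|D\boldsymbol u_\varepsilon\nabla\psi|\le t|\nabla\psi|$. I apply Young's inequality, splitting the $(t-1)^{p-1}$ piece as
\begin{equation*}
\big[(t-1)^{\frac{p-2}{2}}w^{1/2}|V|\psi/t\big]\cdot\big[(t-1)^{\frac p2}w^{1/2}|\nabla\psi|\big],
\end{equation*}
and the $(t-1)^{p-2}$ piece analogously with $t|\nabla\psi|$ in the second factor. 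This gives a small multiple $\hat\varepsilon(t-1)^{p-2}w|V|^2\psi^2/t^2$, which is absorbed into the $\mathcal J_2$ term, plus
\begin{equation*}
C\big((t-1)^{p-2}t^2+(t-1)^p\big)w|\nabla\psi|^2\le C(t-1)^{p-1-\alpha}t^2|\nabla\psi|^2,
\end{equation*}
using $w\le(t-1)^{1-\alpha}$ and $(t-1)\le t$. The hypothesis $\alpha<p-1$ guarantees that this weight is bounded near $t=1$, so the right-hand side is finite uniformly in $\delta$. Collecting the three terms, choosing $\hat\varepsilon$ small, and letting $\delta\to0$ by monotone convergence yields \eqref{Lemma22}.
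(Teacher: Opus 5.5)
Your proposal is correct and follows the paper's proof essentially step by step. It uses the same splitting $D\boldsymbol\varphi=\mathcal J_1+\mathcal J_2+\mathcal J_3$ and the same identities \eqref{stima1} and \eqref{stima2}; it drops the same nonnegative pieces ($\mathcal J_{1,2}+\mathcal J_{1,3}\ge 0$ and $\mathcal J_{2,2}+\mathcal J_{2,3}\ge 0$, via $|D\boldsymbol u_\varepsilon V|\le t|V|$), and it absorbs the $\mathcal J_3$ contribution into $\mathcal J_{2,1}$ by Young's inequality before letting $\delta\to 0$. The only cosmetic difference is that you apply Young separately to the two $\mathcal J_3$ pieces, whereas the paper first dominates $\mathcal J_{3,2}$ by $\mathcal J_{3,1}$ using $(t-1)_+\le t$.
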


\begin{proof}
  We begin the proof observing that the following equality holds
\begin{equation}\label{cose2}
    \begin{split}
        &\int_{B_{R}}\partial_{x_i}\left((|D\boldsymbol{u}_\varepsilon|-1)_+^{p-1}\frac{D\boldsymbol{u}_\varepsilon}{|D\boldsymbol{u}_\varepsilon|}\right):D\boldsymbol{\varphi} \,dx\\&=\int_{B_{R}}(p-1)(|{D\boldsymbol{u}_\varepsilon}|-1)_+^{p-2}\left({D\boldsymbol{u}_{\varepsilon,i}}:\frac{{D\boldsymbol{u}_\varepsilon}}{|{D\boldsymbol{u}_\varepsilon}|} \right)\left(\frac{{D\boldsymbol{u}_\varepsilon}}{|{D\boldsymbol{u}_\varepsilon}|}:{D\boldsymbol \varphi} \right)\,dx
        \\&+\int_{B_{R}}(|{D\boldsymbol{u}_\varepsilon}|-1)_+^{p-1}\left(\frac{{D\boldsymbol{u}_{\varepsilon,i}}}{|{D\boldsymbol{u}_\varepsilon}|}:{D\boldsymbol \varphi} \right)\,dx
        \\&-\int_{B_{R}}(|{D\boldsymbol{u}_\varepsilon}|-1)_+^{p-1}\left(\frac{{D\boldsymbol{u}_{\varepsilon,i}}}{|{D\boldsymbol{u}_\varepsilon}|}:\frac{{D\boldsymbol{u}_\varepsilon}}{|{D\boldsymbol{u}_\varepsilon}|} \right)\left(\frac{{D\boldsymbol{u}_\varepsilon}}{|{D\boldsymbol{u}_\varepsilon}|}:{D\boldsymbol \varphi} \right)\,dx.
\end{split}
    \end{equation}
  
 Now we estimate the right-hand side of \eqref{cose2}. To this end, we recall the expression of the gradient of the test function $\varphi$ defined in \eqref{testphi}, namely \eqref{gradientetestphi}. In particular, the terms arising from \({\mathcal{J}}_1\) and \({\mathcal{J}}_2\) will be estimated from below, whereas the terms coming from \({\mathcal{J}}_3\) will be estimated from above. Considering the terms in \eqref{cose2} involving ${\mathcal{J}}_1$, we have

\begin{equation*}\label{primipezzi}
    \begin{split}
        &\int_{B_{R}}(p-1)\left(1-\alpha\frac{(|D\boldsymbol u_\varepsilon|-1-\delta)_+}{(|D\boldsymbol u_\varepsilon|-1)_+}\right)(|{D\boldsymbol{u}_\varepsilon}|-1)_+^{p-2-\alpha}\\&\qquad\qquad\qquad\times\left({D\boldsymbol{u}_{\varepsilon,i}}:\frac{{D\boldsymbol{u}_\varepsilon}}{|{D\boldsymbol{u}_\varepsilon}|} \right)\left(\frac{{D\boldsymbol{u}_\varepsilon}}{|{D\boldsymbol{u}_\varepsilon}|}: \frac{{{\boldsymbol u}_{\varepsilon,i}}\otimes{D^2\boldsymbol{u}_\varepsilon}{D\boldsymbol{u}_\varepsilon}}{|{D\boldsymbol{u}_\varepsilon}|} \right){\psi}^2\chi_{\delta,\varepsilon}\,dx
        \\&+\int_{B_{R}}\left(1-\alpha\frac{(|D\boldsymbol u_\varepsilon|-1-\delta)_+}{(|D\boldsymbol u_\varepsilon|-1)_+}\right)(|{D\boldsymbol{u}_\varepsilon}|-1)_+^{p-1-\alpha}\\&\qquad\qquad\qquad\times\left(\frac{{D\boldsymbol{u}_{\varepsilon,i}} }{|{D\boldsymbol{u}_\varepsilon}|}: \frac{{\boldsymbol u}_{\varepsilon,i}\otimes{D^2\boldsymbol{u}_\varepsilon}{D\boldsymbol{u}_\varepsilon}}{|{D\boldsymbol{u}_\varepsilon}|} \right){\psi}^2\chi_{\delta,\varepsilon}\,dx
        \\&-\int_{B_{R}}\left(1-\alpha\frac{(|D\boldsymbol u_\varepsilon|-1-\delta)_+}{(|D\boldsymbol u_\varepsilon|-1)_+}\right)(|{D\boldsymbol{u}_\varepsilon}|-1)_+^{p-1-\alpha}\\&\qquad\qquad\qquad\times\left(\frac{{D\boldsymbol{u}_{\varepsilon,i}} }{|{D\boldsymbol{u}_\varepsilon}|}:\frac{{D\boldsymbol{u}_\varepsilon}}{|{D\boldsymbol{u}_\varepsilon}|} \right)\left(\frac{{D\boldsymbol{u}_\varepsilon}}{|{D\boldsymbol{u}_\varepsilon}|}:\frac{{\boldsymbol u}_{\varepsilon,i}\otimes{D^2\boldsymbol{u}_\varepsilon}{D\boldsymbol{u}_\varepsilon}}{|{D\boldsymbol{u}_\varepsilon}|} \right){\psi}^2\chi_{\delta,\varepsilon}\,dx.
\end{split}
\end{equation*}
Recalling \eqref{eq:vect} and denoting by \((D^{2}\boldsymbol{u}_\varepsilon\, D\boldsymbol{u}_\varepsilon)_t\) the \(t\)-th component of the corresponding vector, we note that 
\begin{equation}\label{stima2}
    \begin{split}
&\sum_{i=1}^n\left({D\boldsymbol{u}_{\varepsilon,i}}:D\boldsymbol{u}_\varepsilon \right)\left(D\boldsymbol{u}_\varepsilon: {{\boldsymbol u}_{\varepsilon,i}}\otimes{D^2\boldsymbol{u}_\varepsilon}{D\boldsymbol{u}_\varepsilon} \right)
\\&=\sum_{i,k,t=1}^n\sum_{l,s=1}^Nu_{\varepsilon,ik}^lu_{\varepsilon,k}^lu_{\varepsilon,t}^su_{\varepsilon,i}^s(D^2\boldsymbol u_\varepsilon D\boldsymbol u_\varepsilon)_t
\\&=\sum_{i,k,t=1}^n\sum_{l,s=1}^Nu_{\varepsilon,ik}^lu_{\varepsilon,i}^su_{\varepsilon,k}^lu_{\varepsilon,t}^s(D\boldsymbol u_\varepsilon: D\boldsymbol u_{\varepsilon,t})
\\&=\sum_{i,t=1}^n\sum_{s=1}^Nu_{\varepsilon,i}^s(D\boldsymbol u_\varepsilon: D\boldsymbol u_{\varepsilon,i})u_{\varepsilon,t}^s(D\boldsymbol u_\varepsilon: D\boldsymbol u_{\varepsilon,t})
\\&=\sum_{s=1}^N((D\boldsymbol u_\varepsilon (D^2\boldsymbol u_\varepsilon D\boldsymbol u_\varepsilon))_s)^2=|D\boldsymbol u_\varepsilon (D^2\boldsymbol u_\varepsilon D\boldsymbol u_\varepsilon)|^2=|\Delta_{\infty}\boldsymbol{u}|^2.
    \end{split}
\end{equation}
By \eqref{stima1} and \eqref{stima2}, taking the sum for $i=1,...,n$, we get 
\begin{equation*}
    \begin{split}
        &\int_{B_{R}}(p-1)\left(1-\alpha\frac{(|D\boldsymbol u_\varepsilon|-1-\delta)_+}{(|D\boldsymbol u_\varepsilon|-1)_+}\right)(|{D\boldsymbol{u}_\varepsilon}|-1)_+^{p-2-\alpha}\left| \frac{\Delta_{\infty}\boldsymbol{u}_\varepsilon}{|{D\boldsymbol{u}_\varepsilon}|^2} \right|^2|{D\boldsymbol{u}_\varepsilon}|{\psi}^2\chi_{\delta,\varepsilon}\,dx
        \\&+\int_{B_{R}}\left(1-\alpha\frac{(|D\boldsymbol u_\varepsilon|-1-\delta)_+}{(|D\boldsymbol u_\varepsilon|-1)_+}\right)(|{D\boldsymbol{u}_\varepsilon}|-1)_+^{p-1-\alpha}\left| \frac{{D^2\boldsymbol{u}_\varepsilon}{D\boldsymbol{u}_\varepsilon}}{|{D\boldsymbol{u}_\varepsilon}|} \right|^2{\psi}^2\chi_{\delta,\varepsilon}\,dx
        \\&-\int_{B_{R}}\left(1-\alpha\frac{(|D\boldsymbol u_\varepsilon|-1-\delta)_+}{(|D\boldsymbol u_\varepsilon|-1)_+}\right)(|{D\boldsymbol{u}_\varepsilon}|-1)_+^{p-1-\alpha}\left| \frac{\Delta_{\infty}\boldsymbol{u}_\varepsilon}{|{D\boldsymbol{u}_\varepsilon}|^2} \right|^2{\psi}^2\chi_{\delta,\varepsilon}\,dx.
\end{split}
\end{equation*}
Since $\alpha>0$ and $(|D\boldsymbol u_\varepsilon|-1-\delta)_+\leq (|D\boldsymbol u_\varepsilon|-1)_+$, the latter terms can be estimated from below as
\begin{equation*}
    \begin{split}
        &\left(1-\alpha\right)\int_{B_{R}}(p-1)(|{D\boldsymbol{u}_\varepsilon}|-1)_+^{p-2-\alpha}\left| \frac{\Delta_{\infty}\boldsymbol{u}_\varepsilon}{|{D\boldsymbol{u}_\varepsilon}|^2} \right|^2|{D\boldsymbol{u}_\varepsilon}|{\psi}^2\chi_{\delta,\varepsilon}\,dx
        \\&+\left(1-\alpha\right)\int_{B_{R}}(|{D\boldsymbol{u}_\varepsilon}|-1)_+^{p-1-\alpha}\left| \frac{{D^2\boldsymbol{u}_\varepsilon}{D\boldsymbol{u}_\varepsilon}}{|{D\boldsymbol{u}_\varepsilon}|} \right|^2{\psi}^2\chi_{\delta,\varepsilon}\,dx
        \\&-\left(1-\alpha\right)\int_{B_{R}}(|{D\boldsymbol{u}_\varepsilon}|-1)_+^{p-1-\alpha}\left| \frac{\Delta_{\infty}\boldsymbol{u}_\varepsilon}{|{D\boldsymbol{u}_\varepsilon}|^2} \right|^2{\psi}^2\chi_{\delta,\varepsilon}\,dx\\&:={\mathcal{J}}_{1,1}+{\mathcal{J}}_{1,2}+{\mathcal{J}}_{1,3}.
    \end{split}
\end{equation*}
Since $\alpha<1$, the integral $\mathcal{J}_{1,2}$ is non–negative. Using the Cauchy-Schwarz's inequality, we deduce that ${\mathcal{J}}_{1,3}\leq {\mathcal{J}}_{1,2}$. Therefore, we get the following inequality
\begin{equation}\label{pezzoimportantino}
    {\mathcal{J}}_{1,1}+{\mathcal{J}}_{1,2}+{\mathcal{J}}_{1,3}\geq {\mathcal{J}}_{1,1}.
\end{equation}

Considering the terms involving $\mathcal{J}_2$ in the right-hand side of \eqref{cose2}, we have 
\begin{equation*}\label{secondipezzi}
    \begin{split}
        &\int_{B_{R}}(p-1)(|{D\boldsymbol{u}_\varepsilon}|-1)_+^{p-2-\alpha}(|{D\boldsymbol{u}_\varepsilon}|-1-\delta)_+\left({D\boldsymbol{u}_{\varepsilon,i}}:\frac{{D\boldsymbol{u}_\varepsilon}}{|{D\boldsymbol{u}_\varepsilon}|} \right)\left(\frac{{D\boldsymbol{u}_\varepsilon}}{|{D\boldsymbol{u}_\varepsilon}|}:{{D\boldsymbol{u}_{\varepsilon,i}}}  \right){\psi}^2\,dx\\&+\int_{B_{R}}(|{D\boldsymbol{u}_\varepsilon}|-1)_+^{p-1-\alpha}(|{D\boldsymbol{u}_\varepsilon}|-1-\delta)_+\left(\frac{{D\boldsymbol{u}_{\varepsilon,i}}}{|{D\boldsymbol{u}_\varepsilon}|}: {{D\boldsymbol{u}_{\varepsilon,i}}}\right){\psi}^2\,dx
        \\&-\int_{B_{R}}(|{D\boldsymbol{u}_\varepsilon}|-1)_+^{p-1-\alpha}(|{D\boldsymbol{u}_\varepsilon}|-1-\delta)_+\left(\frac{{D\boldsymbol{u}_{\varepsilon,i}}}{|{D\boldsymbol{u}_\varepsilon}|}:\frac{{D\boldsymbol{u}_\varepsilon}}{|{D\boldsymbol{u}_\varepsilon}|} \right)\left(\frac{{D\boldsymbol{u}_\varepsilon}}{|{D\boldsymbol{u}_\varepsilon}|}:  {{D\boldsymbol{u}_{\varepsilon,i}}}\right){\psi}^2\,dx
        \end{split}
\end{equation*}
By recalling \eqref{eq:vect} and summing over the indices $i=1,\ldots,n$, we obtain
\begin{equation*}\label{secondipezzi2}
    \begin{split}
        &\int_{B_{R}}(p-1)(|{D\boldsymbol{u}_\varepsilon}|-1)_+^{p-2-\alpha}(|{D\boldsymbol{u}_\varepsilon}|-1-\delta)_+\left|\frac{{D^2\boldsymbol u_\varepsilon}{D\boldsymbol{u}_\varepsilon}}{|{D\boldsymbol{u}_\varepsilon}|} \right|^2{\psi}^2\,dx\\&+\int_{B_{R}}(|{D\boldsymbol{u}_\varepsilon}|-1)_+^{p-1-\alpha}(|{D\boldsymbol{u}_\varepsilon}|-1-\delta)_+\|{D^2\boldsymbol u_\varepsilon}\|^2\frac{1}{|{D\boldsymbol{u}_\varepsilon}|}{\psi}^2\,dx\\&-\int_{B_{R}}(|{D\boldsymbol{u}_\varepsilon}|-1)_+^{p-1-\alpha}(|{D\boldsymbol{u}_\varepsilon}|-1-\delta)_+\left|\frac{{D^2\boldsymbol u_\varepsilon}{D\boldsymbol{u}_\varepsilon}}{|{D\boldsymbol{u}_\varepsilon}|}\right|^2\frac{1}{|{D\boldsymbol{u}_\varepsilon}|}{\psi}^2\,dx\\&={\mathcal{J}}_{2,1}+{\mathcal{J}}_{2,2}+{\mathcal{J}}_{2,3}.
\end{split}
\end{equation*}
We note that, by using Cauchy-Schwarz's inequality we get ${\mathcal{J}}_{2,3}\leq {\mathcal{J}}_{2,2}.$
Therefore, we deduce
\begin{equation}\label{pezzotildeJ_2}
    {\mathcal{J}}_{2,1}+{\mathcal{J}}_{2,2}+{\mathcal{J}}_{2,3}\geq {\mathcal{J}}_{2,1}.
\end{equation}

We now analyze the terms coming from ${\mathcal{J}}_3$ in the right-hand side of \eqref{cose2}. Taking the sum over \(i=1,\dots,n\) and applying Cauchy–Schwarz's inequality, we get

\begin{equation*}\label{terzipezzi}
    \begin{split}
&2\int_{B_{R}}(p-1)(|{D\boldsymbol{u}_\varepsilon}|-1)_+^{p-2-\alpha}(|{D\boldsymbol{u}_\varepsilon}|-1-\delta)_+\left({D\boldsymbol{u}_{\varepsilon,i}}:\frac{{D\boldsymbol{u}_\varepsilon}}{|{D\boldsymbol{u}_\varepsilon}|} \right)\left(\frac{{D\boldsymbol{u}_\varepsilon}}{|{D\boldsymbol{u}_\varepsilon}|}:{{\boldsymbol u}_{\varepsilon,i}}\otimes\nabla\psi \right){\psi}\,dx
\\&+2\int_{B_{R}}(|{D\boldsymbol{u}_\varepsilon}|-1)_+^{p-1-\alpha}(|{D\boldsymbol{u}_\varepsilon}|-1-\delta)_+\left(\frac{{D\boldsymbol{u}_{\varepsilon,i}}}{|{D\boldsymbol{u}_\varepsilon}|}: {{\boldsymbol u}_{\varepsilon,i}}\otimes\nabla\psi\right){\psi}\,dx
\\&-2\int_{B_{R}}(|{D\boldsymbol{u}_\varepsilon}|-1)_+^{p-1-\alpha}(|{D\boldsymbol{u}_\varepsilon}|-1-\delta)_+\left(\frac{{D\boldsymbol{u}_{\varepsilon,i}}}{|{D\boldsymbol{u}_\varepsilon}|}:\frac{{D\boldsymbol{u}_\varepsilon}}{|{D\boldsymbol{u}_\varepsilon}|} \right)\left(\frac{{D\boldsymbol{u}_\varepsilon}}{|{D\boldsymbol{u}_\varepsilon}|}: {{\boldsymbol u}_{\varepsilon,i}}\otimes\nabla\psi\right){\psi}\,dx
\\&\leq  C(p)\int_{B_{R}}(|{D\boldsymbol{u}_\varepsilon}|-1)_+^{p-2-\alpha}(|{D\boldsymbol{u}_\varepsilon}|-1-\delta)_+\left|{D^2\boldsymbol u_\varepsilon}\frac{{D\boldsymbol{u}_\varepsilon}}{|{D\boldsymbol{u}_\varepsilon}|} \right||{D\boldsymbol{u}_\varepsilon}||\nabla\psi| {\psi}\,dx
\\&+C(p)\int_{B_{R}}(|{D\boldsymbol{u}_\varepsilon}|-1)_+^{p-1-\alpha}(|{D\boldsymbol{u}_\varepsilon}|-1-\delta)_+\left|{D^2\boldsymbol u_\varepsilon}\frac{{D\boldsymbol{u}_\varepsilon}}{|{D\boldsymbol{u}_\varepsilon}|} \right||\nabla\psi|{\psi}\,dx=:{\mathcal{J}}_{3,1}+{\mathcal{J}}_{3,2}.
        \end{split}
\end{equation*}

Since $|D\boldsymbol{u}_\varepsilon|\geq (|{D\boldsymbol{u}_\varepsilon}|-1)_+$ we deduce that \({\mathcal{J}}_{3,2} \leq {\mathcal{J}}_{3,1}\). So, the preceding terms can be estimated by \({\mathcal{J}}_{3,1}\). Using the weighted Young's inequality, we deduce 
\begin{equation}\label{pezzo3}
    \begin{split}
{\mathcal{J}}_{3,1}&\leq \hat{\varepsilon}\int_{B_{R}}(|{D\boldsymbol{u}_\varepsilon}|-1)_+^{p-2-\alpha}(|{D\boldsymbol{u}_\varepsilon}|-1-\delta)_+\left|\frac{{D^2\boldsymbol u_\varepsilon}{D\boldsymbol{u}_\varepsilon}}{|{D\boldsymbol{u}_\varepsilon}|} \right|^2{\psi}^2\,dx
\\&+ C(p,\hat{\varepsilon})\int_{B_{R}}(|{D\boldsymbol{u}_\varepsilon}|-1)_+^{p-2-\alpha}(|{D\boldsymbol{u}_\varepsilon}|-1-\delta)_+|{D\boldsymbol{u}_\varepsilon}|^2|\nabla\psi|^2\,dx
        \end{split}
\end{equation}

Finally, by combining \eqref{pezzoimportantino}, \eqref{pezzotildeJ_2} and \eqref{pezzo3},  and plugging them into \eqref{cose2}, we obtain

\begin{equation*}
    \begin{split}
    &(1-\alpha)\int_{B_{R}}(p-1)(|{D\boldsymbol{u}_\varepsilon}|-1)_+^{p-2-\alpha}\left| \frac{\Delta_{\infty}\boldsymbol{u}_\varepsilon}{|{D\boldsymbol{u}_\varepsilon}|^2} \right|^2|{D\boldsymbol{u}_\varepsilon}|{\psi}^2\chi_{\delta,\varepsilon}\,dx
    \\&+(p-1-\hat{\varepsilon})\int_{B_{R}}(|{D\boldsymbol{u}_\varepsilon}|-1)_+^{p-2-\alpha}(|{D\boldsymbol{u}_\varepsilon}|-1-\delta)_+\left|\frac{{D^2\boldsymbol u_\varepsilon}{D\boldsymbol{u}_\varepsilon}}{|{D\boldsymbol{u}_\varepsilon}|} \right|^2{\psi}^2\,dx
    \\&\leq C(p,\hat{\varepsilon})\int_{B_{R}}(|{D\boldsymbol{u}_\varepsilon}|-1)_+^{p-2-\alpha}(|{D\boldsymbol{u}_\varepsilon}|-1-\delta)_+|{D\boldsymbol{u}_\varepsilon}|^2|\nabla\psi|^2\,dx
\\&+\sum_{i=1}^n\int_{B_{R}}\left(\partial_{x_i}\left((|D\boldsymbol{u}_\varepsilon|-1)_+^{p-1}\frac{D\boldsymbol{u}_\varepsilon}{|D\boldsymbol{u}_\varepsilon|}\right):D\boldsymbol{\varphi} \right)\,dx
    \end{split}
\end{equation*}
Note that, since $u_\varepsilon\in W^{1,\infty}_{loc}(B_R)$, the first term on the r.h.s. of the previous inequality is bounded for $\alpha<\min\{1,p-1\}$. Using the fact that \((|D\boldsymbol{u}_\varepsilon|-1-\delta)_+ \leq (|D\boldsymbol{u}_\varepsilon|-1)_+\) in the right-hand side of the previous inequality, and then letting \(\delta \to 0\), we obtain the thesis.
\end{proof}

The preceding two lemmas allow us to derive the following proposition.
\begin{proposition}\label{bellazio}
   Let \(\Omega\) be an open set in \(\mathbb{R}^n\) and assume that \(B_{R}\subset\subset \Omega\). Let \(\boldsymbol{u}_\varepsilon\) be a weak solution of \eqref{eq:problregol} and let \(\boldsymbol{\varphi}\) be the function defined in \eqref{testphi}. Moreover, we assume that $\boldsymbol{f}\in W^{1,1}_{loc}(\Omega)\cap L^q_{loc}(\Omega)$, with $q>n$. Then, for $0<\alpha<\min \{1,p-1\}$, the following inequality holds
\begin{equation}\label{bombolone}
    \begin{split}
    &\int_{B_{R}} \varepsilon{(|D\boldsymbol u_\varepsilon|-1)_+^{-\alpha}}\left|\frac{{D^2\boldsymbol{u}_\varepsilon}{D\boldsymbol{u}_\varepsilon}}{|{D\boldsymbol{u}_\varepsilon}|}\right|^2|{D\boldsymbol{u}_\varepsilon}|{\psi}^2\chi_{0,\varepsilon}\,dx
      \\&+\int_{B_{R}}(|{D\boldsymbol{u}_\varepsilon}|-1)_+^{p-2-\alpha}\left| \frac{\Delta_{\infty}\boldsymbol{u}_\varepsilon}{|{D\boldsymbol{u}_\varepsilon}|^2} \right|^2|{D\boldsymbol{u}_\varepsilon}|{\psi}^2\chi_{0,\varepsilon}\,dx
     \\&\leq C(\alpha)\int_{B_{R}} \varepsilon{(|{D\boldsymbol{u}_\varepsilon}|-1)_+^{1-\alpha}}|{D\boldsymbol{u}_\varepsilon}|^2|\nabla\psi|^2\,dx
+\\&+C(p,\alpha)\int_{B_{R}}(|{D\boldsymbol{u}_\varepsilon}|-1)_+^{p-1-\alpha}|{D\boldsymbol{u}_\varepsilon}|^2|\nabla\psi|^2\,dx+C(p,\alpha)\sum_{i=1}^n\int_{B_{R}}\langle \boldsymbol{f}_{i},\boldsymbol{\varphi}\rangle\,dx,
    \end{split}
\end{equation}
where $C(\alpha)$ and $C(p,\alpha)$ are positive constants.
\end{proposition}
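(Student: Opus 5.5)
The plan is to combine Lemma \ref{Lemma11} and Lemma \ref{Lemma2} through the linearized equation \eqref{primoregolarizzato}. With $\boldsymbol{\varphi}$ as in \eqref{testphi}, I test \eqref{primoregolarizzato} for each fixed $i$ and sum over $i=1,\dots,n$. The left-hand side then splits into two pieces:
\begin{equation*}
\varepsilon\sum_{i=1}^n\int_{B_R}(D\boldsymbol{u}_{\varepsilon,i}:D\boldsymbol{\varphi})\,dx
\quad\text{and}\quad
\sum_{i=1}^n\int_{B_R}\partial_{x_i}\Big((|D\boldsymbol{u}_\varepsilon|-1)_+^{p-1}\tfrac{D\boldsymbol{u}_\varepsilon}{|D\boldsymbol{u}_\varepsilon|}\Big):D\boldsymbol{\varphi}\,dx .
\end{equation*}
The second identification is exactly \eqref{cose2}. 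The right-hand side is $\sum_i\int\langle\boldsymbol{f}_i,\boldsymbol{\varphi}\rangle\,dx$. Admissibility of $\boldsymbol{\varphi}$ comes from the density argument already noted after \eqref{testphi}. It uses the closedness of $\{|D\boldsymbol{u}_\varepsilon|\le1\}$, which holds since $q>n$, together with $\boldsymbol{f}\in W^{1,1}$ so that $\boldsymbol{f}_i$ makes sense. On the support of $\boldsymbol\varphi$ we have $|D\boldsymbol{u}_\varepsilon|>1+\delta$, and there $\boldsymbol{u}_\varepsilon\in W^{2,2}$ with $D\boldsymbol{u}_\varepsilon$ bounded.

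Next I bound each piece from below by the previous lemmas. Multiplying \eqref{Stimasulprimoregolarizzato} by $\varepsilon/C(\alpha)$ and dropping the nonnegative $\|D^2\boldsymbol{u}_\varepsilon\|^2$ term gives
\begin{equation*}
\varepsilon\sum_i\int(D\boldsymbol{u}_{\varepsilon,i}:D\boldsymbol{\varphi})\ \ge\ \frac{\varepsilon}{C(\alpha)}\int(|D\boldsymbol u_\varepsilon|-1)_+^{-\alpha}\Big|\tfrac{D^2\boldsymbol{u}_\varepsilon D\boldsymbol{u}_\varepsilon}{|D\boldsymbol{u}_\varepsilon|}\Big|^2|D\boldsymbol{u}_\varepsilon|\psi^2\chi_{0,\varepsilon}-\varepsilon\int(|D\boldsymbol u_\varepsilon|-1)_+^{1-\alpha}|D\boldsymbol{u}_\varepsilon|^2|\nabla\psi|^2 .
\end{equation*}
In the same way, \eqref{Lemma22} divided by $C(p,\alpha)$, again discarding the nonnegative second term on its left, bounds the second piece from below by the $\Delta_\infty$ integral minus $\int(|D\boldsymbol{u}_\varepsilon|-1)_+^{p-1-\alpha}|D\boldsymbol{u}_\varepsilon|^2|\nabla\psi|^2$. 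Adding the two bounds, using the equation to replace the sum of the pieces by $\sum_i\int\langle\boldsymbol f_i,\boldsymbol\varphi\rangle$, and multiplying through by $\max\{C(\alpha),C(p,\alpha)\}$ yields \eqref{bombolone}, up to renaming constants.

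The main obstacle is consistency of the limit $\delta\to0$. Both lemmas were stated after letting $\delta\to0$, while the identity from \eqref{primoregolarizzato} holds for each fixed $\delta>0$, with $\boldsymbol\varphi$ depending on $\delta$. The cleanest route is to run the combination at fixed $\delta$, using the pre-limit inequalities from the proofs, and only then pass to the limit. On the left this uses monotone convergence, since $\chi_{\delta,\varepsilon}\uparrow\chi_{0,\varepsilon}$. On the right it uses dominated convergence for $\int\langle\boldsymbol f_i,\boldsymbol\varphi_\delta\rangle$: here $|\boldsymbol\varphi_\delta|\le(|D\boldsymbol u_\varepsilon|-1)_+^{1-\alpha}|D\boldsymbol u_\varepsilon|\psi^2$, which is bounded because $D\boldsymbol u_\varepsilon\in L^\infty_{loc}$ and $\alpha<1$, while $\boldsymbol f_i\in L^1$. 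The $|\nabla\psi|$ terms are finite precisely because $\alpha<\min\{1,p-1\}$.
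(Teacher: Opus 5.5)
Your proposal is correct and follows the paper's proof. The paper likewise rewrites the linearized equation as \eqref{a_4}, adds $\varepsilon$ times Lemma \ref{Lemma11} to Lemma \ref{Lemma2}, and discards the nonnegative $\|D^2\boldsymbol u_\varepsilon\|^2$ and $|D^2\boldsymbol u_\varepsilon D\boldsymbol u_\varepsilon|^2$ terms. Your choice to combine the pre-limit inequalities at fixed $\delta$ and only then let $\delta\to0$ (monotone convergence on the left, dominated convergence for $\int\langle\boldsymbol f_i,\boldsymbol\varphi_\delta\rangle$) makes explicit a step the paper leaves implicit, since its lemmas pass to the limit while $\boldsymbol\varphi$ still depends on $\delta$.
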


 \begin{proof}
 We recall that if $\boldsymbol{f}\in L^{q}(B_{R})$ with $q>n$, then by \cite[Lemma~3.3]{BDGP1} the function $\boldsymbol{\varphi}$ defined in \eqref{testphi} is an admissible test function. We start the proof noting that the equation \eqref{primoregolarizzato} can be rewritten as
\begin{equation}\label{a_4}
    \begin{split}
    &\int_{B_{R}} \varepsilon\left( {D\boldsymbol{u}_{\varepsilon,i}}:{D\boldsymbol \varphi}\right)\,dx+\int_{B_{R}}\partial_{x_i}\left((|{D\boldsymbol{u}_\varepsilon}|-1)_+^{p-1}\frac{{D\boldsymbol{u}_\varepsilon}}{|{D\boldsymbol{u}_\varepsilon}|} \right):{D\boldsymbol \varphi}\,dx\\&=\int_{B_{R}}\langle \boldsymbol{f}_{i},\boldsymbol{\varphi}\rangle\,dx.
\end{split}
    \end{equation}

By Lemma \ref{Lemma11} and Lemma \ref{Lemma2}, from the equation \eqref{a_4} we deduce that
\begin{equation}\label{zio}
    \begin{split}
      &\int_{B_{R}} \varepsilon{(|D\boldsymbol u_\varepsilon|-1)_+^{-\alpha}}\left|\frac{{D^2\boldsymbol{u}_\varepsilon}{D\boldsymbol{u}_\varepsilon}}{|{D\boldsymbol{u}_\varepsilon}|}\right|^2|{D\boldsymbol{u}_\varepsilon}|{\psi}^2\chi_{0,\varepsilon}\,dx
          +\int_{B_{R}} \varepsilon{(|D\boldsymbol u_\varepsilon|-1)_+^{1-\alpha}}\|{D^2\boldsymbol{u}_\varepsilon}\|^2{\psi}^2\,dx
          \\&+\int_{B_{R}}(|{D\boldsymbol{u}_\varepsilon}|-1)_+^{p-2-\alpha}\left| \frac{\Delta_{\infty}\boldsymbol{u}_\varepsilon}{|{D\boldsymbol{u}_\varepsilon}|^2} \right|^2|{D\boldsymbol{u}_\varepsilon}|{\psi}^2\chi_{0,\varepsilon}\,dx
    +\int_{B_{R}}(|{D\boldsymbol{u}_\varepsilon}|-1)_+^{p-1-\alpha}\left|\frac{{D^2\boldsymbol u_\varepsilon}{D\boldsymbol{u}_\varepsilon}}{|{D\boldsymbol{u}_\varepsilon}|} \right|^2{\psi}^2\,dx
         \\&\leq C(\alpha)\int_{B_{R}} \varepsilon{{(|D\boldsymbol u_\varepsilon|-1)_+^{1-\alpha}}}|{D\boldsymbol{u}_\varepsilon}|^2|\nabla\psi|^2\,dx
\\&+C(p,\alpha)\int_{B_{R}}(|{D\boldsymbol{u}_\varepsilon}|-1)_+^{p-1-\alpha}|{D\boldsymbol{u}_\varepsilon}|^2|\nabla\psi|^2\,dx+C(p,\alpha)\sum_{i=1}^n\int_{B_{R}}\langle \boldsymbol{f}_{i},\boldsymbol{\varphi}\rangle\,dx.
\end{split}
\end{equation}
Since the second and the fourth integrals in the l.h.s. of the previous inequality are positive the latter can be rewritten as
\begin{equation*}
    \begin{split}
    &\int_{B_{R}} \varepsilon{(|D\boldsymbol u_\varepsilon|-1)_+^{-\alpha}}\left|\frac{{D^2\boldsymbol{u}_\varepsilon}{D\boldsymbol{u}_\varepsilon}}{|{D\boldsymbol{u}_\varepsilon}|}\right|^2|{D\boldsymbol{u}_\varepsilon}|{\psi}^2\chi_{0,\varepsilon}\,dx
      \\&+\int_{B_{R}}(|{D\boldsymbol{u}_\varepsilon}|-1)_+^{p-2-\alpha}\left| \frac{\Delta_{\infty}\boldsymbol{u}_\varepsilon}{|{D\boldsymbol{u}_\varepsilon}|^2} \right|^2|{D\boldsymbol{u}_\varepsilon}|{\psi}^2\chi_{0,\varepsilon}\,dx
     \\&\leq C(\alpha)\int_{B_{R}} \varepsilon{(|{D\boldsymbol{u}_\varepsilon}|-1)_+^{1-\alpha}}|{D\boldsymbol{u}_\varepsilon}|^2|\nabla\psi|^2\,dx
+\\&+C(p,\alpha)\int_{B_{R}}(|{D\boldsymbol{u}_\varepsilon}|-1)_+^{p-1-\alpha}|{D\boldsymbol{u}_\varepsilon}|^2|\nabla\psi|^2\,dx+C(p,\alpha)\sum_{i=1}^n\int_{B_{R}}\langle \boldsymbol{f}_{i},\boldsymbol{\varphi}\rangle\,dx.
\end{split}
\end{equation*}

\end{proof}

We are now in a position to proceed with the core of the Section.

\begin{proof}[Proof of Theorem \ref{TeoDerivateSeconde}]
Let $\tilde{\Omega} \subset\subset \Omega$ and choose $R>0$ such that $\tilde{\Omega} \subset\subset B_R \subset\subset \Omega$. Let $\boldsymbol{u}_\varepsilon$ be a weak solution to the regularized problem \eqref{eq:problregol} in $B_R$. By \cite[Lemma~3.1 and Lemma~3.2]{BDGP1}, we have the uniform bound
\begin{equation}\label{stimadelgradiente}
\|D\boldsymbol{u}_\varepsilon\|_{L^{\infty}(\tilde \Omega)}\leq C(n,N,p,\tilde R,\|\boldsymbol{f}\|_{L^q},\|D\boldsymbol{u}\|_{L^p}).
\end{equation}

Let $\delta>0$ be fixed. We define the non-degenerate open set
\begin{equation*}
\Omega_\delta :=  \{ x \in \tilde{\Omega} : |D\boldsymbol{u}(x)| > 1+2\delta \}.
\end{equation*}
By \cite[Theorem~1.1]{BDGP1}, the function $\mathcal K(D\boldsymbol{u}) = (|D\boldsymbol{u}|-1)_+$ is continuous, hence $\Omega_\delta$ is an open set strictly separated from the singular region. Now we set 
\begin{equation*}
\begin{split}
 \boldsymbol{v}_\varepsilon:=(|D\boldsymbol{u}_\varepsilon|-1-\delta)_+\frac{D\boldsymbol{u}_\varepsilon}{|D\boldsymbol{u}_\varepsilon|},\quad \boldsymbol{v}:=(|D\boldsymbol{u}|-1-\delta)_+\frac{D\boldsymbol{u}}{|D\boldsymbol{u}|}.
 \end{split}
 \end{equation*}
By \cite[Lemma~3.3]{BDGP1} (see in particular the beginning of the proof of Theorem~1.1 therein), up to subsequences, we have \(\boldsymbol v_\varepsilon \rightarrow \boldsymbol v\) in the norm $C^{0,\alpha(\delta)}(\overline{\Omega}_\delta)$. In the following we will repeatedly use the following interpolation inequality: 
if $g\in C^{0,\alpha}$ and $0<\alpha'<\alpha$, then
\begin{equation}\label{disinterpolazione}
[g]_{C^{0,\alpha'}} \leq C\, \|g\|_{L^\infty}^{\,1-\frac{\alpha'}{\alpha}}\, [g]_{C^{0,\alpha}}^{\,\frac{\alpha'}{\alpha}},
\end{equation}
where $C>0$ is a constant and $[\cdot]$ denotes the Hölder seminorm. By \eqref{disinterpolazione} and by the reverse triangle inequality, we deduce that $|\boldsymbol v_\varepsilon| \rightarrow |\boldsymbol v|$ in the norm $C^{0,\alpha'(\delta)}(\overline{\Omega}_\delta)$, with $0<\alpha'(\delta)<\alpha(\delta)$.
Moreover, we note that, there exists $\bar\varepsilon>0$ small enough, such that $|D\boldsymbol u_\varepsilon|\geq 1+3\delta/2$ in $\overline{\Omega}_\delta$, for every $\varepsilon<\bar\varepsilon$. Therefore, we deduce $|D\boldsymbol u_\varepsilon| \rightarrow |D\boldsymbol u|$ in the norm $C^{0,\alpha'(\delta)}(\overline{\Omega}_\delta)$. By the previous considerations and by \eqref{disinterpolazione}, it follows that
\[
\frac{D\boldsymbol u_\varepsilon}{|D\boldsymbol u_\varepsilon|}=\frac{\boldsymbol v_\varepsilon}{|\boldsymbol v_\varepsilon|}\rightarrow\frac{\boldsymbol v}{|\boldsymbol v|}=\frac{D\boldsymbol u}{|D\boldsymbol u|} \qquad\text{in } C^{0,\alpha{''}(\delta)}(\overline{\Omega}_\delta),
\]
with $0<\alpha{''}(\delta)<\alpha{'}(\delta)$. Combining the convergence of the modulus and of the direction, by \eqref{disinterpolazione}, we finally obtain
\begin{equation}\label{convergenzadeigradienti}
D\boldsymbol u_\varepsilon=|D\boldsymbol u_\varepsilon|\,\frac{D\boldsymbol u_\varepsilon}{|D\boldsymbol u_\varepsilon|}\rightarrow|D\boldsymbol u|\,\frac{D\boldsymbol u}{|D\boldsymbol u|}=D\boldsymbol u \qquad\text{in } C^{0,\alpha'''(\delta)}(\overline{\Omega}_\delta),
\end{equation}
with $0<\alpha{'''}(\delta)<\alpha{''}(\delta)$. Because the regularized system is strictly elliptic on $\Omega_\delta$ with uniformly Hölder continuous coefficients, standard elliptic regularity (see \cite[Theorem~9.11]{GT} applied locally to each scalar equation of the system) yields
$\| \boldsymbol u_\varepsilon \|_{W^{2,2}(\Omega_\delta)} \leq C$, where $C>0$ is a constant independent of $\varepsilon$. Therefore, up to a subsequence,
\begin{equation}\label{convergenzaW22}
    \boldsymbol u_\varepsilon \rightharpoonup \boldsymbol u \qquad\text{weakly in } W^{2,2}(\Omega_\delta).
\end{equation}

On the other hand, from \eqref{stimadelgradiente} and Proposition~\ref{bellazio}, choosing a cut-off function $\psi$ such that $\psi \equiv 1$ on $\tilde{\Omega}$, we have the global uniform bound:
\begin{equation}\label{stimauniforme}
\int_{\tilde \Omega}(|{D\boldsymbol{u}_\varepsilon}|-1)_+^{p-2-\alpha}\left| \frac{\Delta_{\infty}\boldsymbol{u}_\varepsilon}{|{D\boldsymbol{u}_\varepsilon}|^2} \right|^2|{D\boldsymbol{u}_\varepsilon}|\chi_{0,\varepsilon}\,dx\leq C,
\end{equation}
where $C$ does not depend on $\varepsilon$ nor on $\delta$. Since the integrand is non-negative, we can restrict the domain of integration to $\Omega_\delta$. We rewrite the restricted integral as
\begin{equation*}
\left\|T_{D\boldsymbol{u}_\varepsilon}(D^2\boldsymbol{u}_\varepsilon)\right\|_{L^2(\Omega_\delta)}^2 \leq C,
\end{equation*}
where we define the linear operator $T_K : \mathbb{R}^{Nn^2} \to \mathbb{R}^N$ as
\[T_{K} M:=(|{K}|-1)_+^{\frac{p-2-\alpha}{2}} |{K}|^{\frac 12} \frac{{K}}{|K|} \left( \frac{M K}{|K|} \right),\quad K\in\{D\boldsymbol u_\varepsilon\,,\,D\boldsymbol u\}.\]
Since $D\boldsymbol{u}_\varepsilon$ is uniformly bounded and strictly away from $1$ on $\Omega_\delta$, the linear operator $T_{D\boldsymbol{u}_\varepsilon}$ is uniformly bounded on $L^2(\Omega_\delta)$. As established before, we have $T_{D\boldsymbol{u}}(D^2\boldsymbol u_\varepsilon) \rightharpoonup T_{D\boldsymbol{u}}(D^2\boldsymbol u)$ weakly in $L^{2}(\Omega_\delta)$. Hence, by the weak lower semi-continuity of the norm squared, we have
\begin{equation}\label{bombolo}
\begin{split}
&\left\|T_{D\boldsymbol{u}}(D^2\boldsymbol{u})\right\|_{L^2(\Omega_\delta)}^2 \leq \liminf_{\varepsilon\rightarrow 0}\left\|T_{D\boldsymbol{u}}(D^2\boldsymbol{u}_\varepsilon)\right\|_{L^2(\Omega_\delta)}^2
\\&=\liminf_{\varepsilon\rightarrow 0}\left( \left\|T_{D\boldsymbol{u}_\varepsilon}(D^2\boldsymbol{u}_\varepsilon)\right\|_{L^2(\Omega_\delta)}^2 + \left\|T_{D\boldsymbol{u}}(D^2\boldsymbol{u}_\varepsilon)\right\|_{L^2(\Omega_\delta)}^2 - \left\|T_{D\boldsymbol{u}_\varepsilon}(D^2\boldsymbol{u}_\varepsilon)\right\|_{L^2(\Omega_\delta)}^2 \right).
\end{split}
\end{equation}
By \eqref{stimauniforme}, the first term inside the limit inferior is bounded by $C$. We just need to prove that the difference vanishes, namely:
\begin{equation*}
\lim_{\varepsilon\rightarrow 0} \left| \left\|T_{D\boldsymbol{u}}(D^2\boldsymbol{u}_\varepsilon)\right\|_{L^2(\Omega_\delta)}^2 - \left\|T_{D\boldsymbol{u}_\varepsilon}(D^2\boldsymbol{u}_\varepsilon)\right\|_{L^2(\Omega_\delta)}^2 \right| =0.
\end{equation*}
Indeed, we can rewrite the argument of the limit as
\begin{equation*}
\begin{split}
&\left|\langle T_{D\boldsymbol{u}}(D^2\boldsymbol{u}_\varepsilon), T_{D\boldsymbol{u}}(D^2\boldsymbol{u}_\varepsilon) \rangle_{L^2(\Omega_\delta)}-\langle T_{D\boldsymbol{u}_\varepsilon}(D^2\boldsymbol{u}_\varepsilon),T_{D\boldsymbol{u}_\varepsilon}(D^2\boldsymbol{u}_\varepsilon) \rangle_{L^2(\Omega_\delta)}\right|
\\&=\left|\langle (T^*_{D\boldsymbol{u}}T_{D\boldsymbol{u}}-T^*_{D\boldsymbol{u}_\varepsilon}T_{D\boldsymbol{u}_\varepsilon})D^2\boldsymbol{u}_\varepsilon, D^2\boldsymbol{u}_\varepsilon \rangle_{L^2(\Omega_\delta)}\right|
\\&\leq \left\|T^*_{D\boldsymbol{u}}T_{D\boldsymbol{u}}-T^*_{D\boldsymbol{u}_\varepsilon}T_{D\boldsymbol{u}_\varepsilon}\right\|_{L^{\infty} (\Omega_\delta)}\|D^2\boldsymbol{u}_\varepsilon\|_{L^{2} (\Omega_\delta)}^2.
\end{split}
\end{equation*}
By \eqref{convergenzadeigradienti} and the uniform $W^{2,2}$ bound \eqref{convergenzaW22}, the above quantity goes to zero as $\varepsilon\rightarrow 0$. Thus, since $|D\boldsymbol{u}| \geq 1$ on $\Omega_\delta$, we deduce that
\begin{equation}\label{stimaOmegadelta}
\int_{\Omega_\delta}(|{D\boldsymbol{u}}|-1)_+^{p-2-\alpha}\left|\Delta_{\infty}\boldsymbol u\right|^2\,dx \leq \int_{\Omega_\delta}(|{D\boldsymbol{u}}|-1)_+^{p-2-\alpha}\left|\frac{\Delta_{\infty}\boldsymbol u}{|D\boldsymbol{u}|^2}\right|^2|D\boldsymbol{u}|\,dx\leq C.
\end{equation}
Finally, since the constant $C$ does not depend on $\delta$, we can let \(\delta\to 0\). The  sets $\Omega_\delta$ invade the whole non-degenerate region $\tilde\Omega \cap \{|D\boldsymbol{u}|>1\}$. By the monotone convergence theorem, we obtain the  conclusion \eqref{eq:stimaseconde}. We remark that, even if $\alpha<0$, estimate~\eqref{eq:stimaseconde} still holds, since $D\boldsymbol{u}$ is locally bounded.

\end{proof}

\section{Integrability of the inverse of the weight}

In this section, we establish an integrability result for the inverse of the weight, and we give the proof of Theorem~\ref{TeoinversodelPeso}.

\begin{lemma}\label{lemmainversopeso}
Let \(\Omega\) be an open set in \(\mathbb{R}^n\) and assume that \(B_{R}\subset\subset \Omega\). Let \(\boldsymbol{u}_\varepsilon\) be a weak solution of \eqref{eq:problregol} and let \(\psi\) be the function defined in \eqref{eq:varphi}. Moreover, we assume $\boldsymbol{f}\in L^q_{loc}(\Omega)$, with $q>n$. Then, for \(k\) fixed in \(\{1,\dots,N\}\) and for parameters \(\theta,\delta,\beta>0\), the following inequality holds:
    \begin{equation*}
    \begin{split}
        \int_{B_{R}} {f}^k\frac{{\psi}^2}{(\delta+(|{D\boldsymbol{u}_\varepsilon}|-1)_+)^\beta} \,dx &\leq 2\theta \beta\int_{B_{R}}\frac{{\psi}^2 }{(\delta+(|{D\boldsymbol{u}_\varepsilon}|-1)_+)^{\beta}}\,dx
        \\&+ 2 \int_{B_{R}} \varepsilon\delta^{-\beta} \psi |\nabla {u}^k_\varepsilon||\nabla \psi| \,dx
        \\&+\frac{\beta}{4\theta}\int_{B_{R}}\varepsilon^2 |D\boldsymbol {u}_\varepsilon|^2\Big|\frac{{D^2\boldsymbol u_\varepsilon}{D\boldsymbol{u}_\varepsilon}}{|{D\boldsymbol{u}_\varepsilon}|}\Big|^2{\psi}^2 \delta^{-\beta-2}\chi_{0,\varepsilon}\,dx
        \\&+ 2 \int_{B_{R}}(|D\boldsymbol{u}_\varepsilon|-1)_+^{p-1-\beta} \psi |\nabla\psi| \,dx
        \\&+\frac{\beta}{4\theta}\int_{B_{R}}(|D\boldsymbol{u}_\varepsilon|-1)_+^{2(p-1)-\beta-2}\left| \frac{\Delta_{\infty}\boldsymbol{u}_\varepsilon}{|{D\boldsymbol{u}_\varepsilon}|^2} \right|^2{\psi}^2 \chi_{0,\varepsilon}\,dx.
\end{split}
\end{equation*}
\end{lemma}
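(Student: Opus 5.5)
We test the $k$-th component of the weak formulation \eqref{eq:deboleregolar} with the vector field $\boldsymbol{\zeta}=\phi\,\boldsymbol{e}^k$, where
\[
\phi:=\frac{\psi^2}{(\delta+(|D\boldsymbol u_\varepsilon|-1)_+)^{\beta}} .
\]
Since $\delta>0$, the weight is bounded by $\delta^{-\beta}$. The function $(|D\boldsymbol u_\varepsilon|-1)_+$ belongs to $W^{1,2}_{loc}$, because $\boldsymbol u_\varepsilon\in W^{1,\infty}_{loc}\cap W^{2,2}_{loc}$. Hence $\phi\in W^{1,2}_0(B_R)$, and it is admissible by density. No cut-off away from $\{|D\boldsymbol u_\varepsilon|\le 1\}$ is needed here, since only first derivatives of $\boldsymbol u_\varepsilon$ enter the weak form.

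The gradient of the test function is
\[
\nabla\phi=\frac{2\psi\nabla\psi}{(\delta+(|D\boldsymbol u_\varepsilon|-1)_+)^{\beta}}-\beta\,\frac{\psi^2}{(\delta+(|D\boldsymbol u_\varepsilon|-1)_+)^{\beta+1}}\,\frac{D^2\boldsymbol u_\varepsilon D\boldsymbol u_\varepsilon}{|D\boldsymbol u_\varepsilon|}\chi_{0,\varepsilon},
\]
using $\nabla|D\boldsymbol u_\varepsilon|=D^2\boldsymbol u_\varepsilon D\boldsymbol u_\varepsilon/|D\boldsymbol u_\varepsilon|$ from \eqref{eq:vect}. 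The weak form then gives
\[
\int f^k\phi=\int \varepsilon\langle\nabla u^k_\varepsilon,\nabla\phi\rangle+\int(|D\boldsymbol u_\varepsilon|-1)_+^{p-1}\Big\langle\frac{\nabla u^k_\varepsilon}{|D\boldsymbol u_\varepsilon|},\nabla\phi\Big\rangle .
\]

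We treat the four resulting pieces separately.

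\emph{$\varepsilon$-term with $\nabla\psi$.} We bound the weight by $\delta^{-\beta}$. This gives $2\int\varepsilon\delta^{-\beta}\psi|\nabla u^k_\varepsilon||\nabla\psi|$.

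\emph{Degenerate term with $\nabla\psi$.} We use $|\nabla u^k_\varepsilon|\le|D\boldsymbol u_\varepsilon|$ and $(\delta+t)^{-\beta}\le t^{-\beta}$ on $\{t>0\}$. This gives $2\int(|D\boldsymbol u_\varepsilon|-1)_+^{p-1-\beta}\psi|\nabla\psi|$.

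\emph{$\varepsilon$-term with second derivatives.} Set $W:=\psi^2(\delta+(|D\boldsymbol u_\varepsilon|-1)_+)^{-\beta}$. We apply Young's inequality $ab\le\theta a^2+b^2/(4\theta)$ with
\[
a=W^{1/2},\qquad b=\varepsilon\,|D\boldsymbol u_\varepsilon|\,\Big|\frac{D^2\boldsymbol u_\varepsilon D\boldsymbol u_\varepsilon}{|D\boldsymbol u_\varepsilon|}\Big|\,\psi\,(\delta+(|D\boldsymbol u_\varepsilon|-1)_+)^{-\beta/2-1}.
\]
We then bound the remaining factor $(\delta+\cdot)^{-\beta-2}$ by $\delta^{-\beta-2}$. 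This yields $\beta\theta\int W$ plus the third term of the statement.

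\emph{Degenerate term with second derivatives.} This is the key point. The inner product is
\[
\sum_j\frac{u^k_{\varepsilon,j}}{|D\boldsymbol u_\varepsilon|}\,\frac{(D^2\boldsymbol u_\varepsilon D\boldsymbol u_\varepsilon)_j}{|D\boldsymbol u_\varepsilon|}=\frac{(\Delta_\infty\boldsymbol u_\varepsilon)_k}{|D\boldsymbol u_\varepsilon|^2},
\]
exactly the $k$-th component of $D\boldsymbol u_\varepsilon(D^2\boldsymbol u_\varepsilon D\boldsymbol u_\varepsilon)$ divided by $|D\boldsymbol u_\varepsilon|^2$. Its modulus is at most $|\Delta_\infty\boldsymbol u_\varepsilon|/|D\boldsymbol u_\varepsilon|^2$. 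So this piece is bounded by
\[
\beta\int(|D\boldsymbol u_\varepsilon|-1)_+^{p-1}\,\frac{\psi^2}{(\delta+(|D\boldsymbol u_\varepsilon|-1)_+)^{\beta+1}}\,\Big|\frac{\Delta_\infty\boldsymbol u_\varepsilon}{|D\boldsymbol u_\varepsilon|^2}\Big|\chi_{0,\varepsilon}.
\]
Young's inequality with the same $\theta$ splits it into $\beta\theta\int W$ plus
\[
\frac{\beta}{4\theta}\int(|D\boldsymbol u_\varepsilon|-1)_+^{2(p-1)}\,(\delta+\cdot)^{-\beta-2}\,\Big|\frac{\Delta_\infty\boldsymbol u_\varepsilon}{|D\boldsymbol u_\varepsilon|^2}\Big|^2\psi^2.
\]
Since $(\delta+t)^{-\beta-2}\le t^{-\beta-2}$ on $\chi_{0,\varepsilon}$, this is the last term of the statement. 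The two $\beta\theta\int W$ contributions add up to $2\theta\beta\int W$.

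The main obstacle is recognizing the $\Delta_\infty$ structure in the degenerate term. It is the only place where one must avoid the crude bound \eqref{eq:FraCo}, because $\Delta_\infty$ is what Theorem \ref{TeoDerivateSeconde} controls. Admissibility of $\phi$ also needs a word: it is in $W^{1,2}_0$ with a bounded weight, and the set $\{|D\boldsymbol u_\varepsilon|=1\}$ causes no trouble because $\chi_{0,\varepsilon}$ appears in the gradient of $(|D\boldsymbol u_\varepsilon|-1)_+$. The hypothesis $\boldsymbol f\in L^q$, $q>n$, is only used to justify the regularity of $\boldsymbol u_\varepsilon$, as in Proposition \ref{bellazio}.
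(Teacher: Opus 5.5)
Your proposal is correct and follows essentially the same route as the paper. You use the same test function $\varphi^k\boldsymbol{e}^k$ and the same four-term splitting. The $\nabla\psi$ terms are handled by Cauchy--Schwarz together with $\delta+(|D\boldsymbol u_\varepsilon|-1)_+\ge\delta$ or $\ge(|D\boldsymbol u_\varepsilon|-1)_+$. The second-order terms are handled by weighted Young's inequality, after identifying $\langle\nabla u^k_\varepsilon, D^2\boldsymbol u_\varepsilon D\boldsymbol u_\varepsilon\rangle$ as the $k$-th component of $\Delta_\infty\boldsymbol u_\varepsilon$. Your admissibility remark (the weight is a bounded Lipschitz function of $|D\boldsymbol u_\varepsilon|\in W^{1,2}_{loc}$) adds a detail the paper leaves implicit.
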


\begin{proof}
Let $\beta \in \R$ be positive and define the following function $\boldsymbol{\zeta}:=(0,\dots,\varphi^k,\dots,0)$, with 
\begin{equation*}
    \varphi^k := \frac{{\psi}^2}{(\delta+(|{D\boldsymbol{u}_\varepsilon}|-1)_+)^\beta},
\end{equation*} 
where \(\psi\) has been defined in \eqref{eq:varphi} and $\delta$ is a positive constant. So 
\begin{equation}\label{gradientedellatestnelpeso}
    \nabla \varphi^k = 2\frac{{\psi} \nabla \psi}{(\delta+(|{D\boldsymbol{u}_\varepsilon}|-1)_+)^\beta}-\beta\frac{{\psi}^2 }{(\delta+(|{D\boldsymbol{u}_\varepsilon}|-1)_+)^{\beta+1}}\frac{{D^2\boldsymbol u_\varepsilon}{D\boldsymbol{u}_\varepsilon}}{|{D\boldsymbol{u}_\varepsilon}|}\chi_{0,\varepsilon},
\end{equation}
where \(\chi_{0,\varepsilon}\) is defined below \eqref{gradientebombolo}. Using $\boldsymbol{\zeta}$ as test function in \eqref{eq:deboleregolar}, it follows that
\begin{equation}\label{cosette}
    \begin{split}
         & \int_{B_{R}} \varepsilon \langle\nabla {u}^k_\varepsilon,\nabla \varphi^k\rangle+(|D\boldsymbol{u}_\varepsilon|-1)_+^{p-1}\Big\langle\frac{\nabla{u}^k_\varepsilon}{|D\boldsymbol{u}_\varepsilon|},\nabla \varphi^k \Big\rangle\,dx = \int_{B_{R}} {f}^k\varphi^k \,dx.
\end{split}
\end{equation}

Plugging \eqref{gradientedellatestnelpeso} in \eqref{cosette}, we infer that the l.h.s. of \eqref{cosette} equals
\begin{equation}\label{terminazzo}
    \begin{split}
       & \int_{B_{R}} \varepsilon \langle\nabla {u}^k_\varepsilon,\nabla \psi \rangle \frac{2\psi}{(\delta+(|{D\boldsymbol{u}_\varepsilon}|-1)_+)^\beta}\,dx \\
        &\quad -\beta\int_{B_{R}}\varepsilon \Big\langle\nabla {u}^k_\varepsilon,\frac{{D^2\boldsymbol u_\varepsilon}{D\boldsymbol{u}_\varepsilon}}{|{D\boldsymbol{u}_\varepsilon}|}\Big\rangle\frac{{\psi}^2\chi_{0,\varepsilon} }{(\delta+(|{D\boldsymbol{u}_\varepsilon}|-1)_+)^{\beta+1}}\,dx \\
        &\quad +\int_{B_{R}}(|D\boldsymbol{u}_\varepsilon|-1)_+^{p-1}\Big\langle\frac{\nabla{u}^k_\varepsilon}{|D\boldsymbol{u}_\varepsilon|},\nabla\psi\Big\rangle \frac{2\psi}{(\delta+(|{D\boldsymbol{u}_\varepsilon}|-1)_+)^\beta}\,dx \\
        &\quad -\beta\int_{B_{R}}(|D\boldsymbol{u}_\varepsilon|-1)_+^{p-1}\Big\langle\frac{\nabla{u}^k_\varepsilon}{|D\boldsymbol{u}_\varepsilon|},\frac{{D^2\boldsymbol u_\varepsilon}{D\boldsymbol{u}_\varepsilon}}{|{D\boldsymbol{u}_\varepsilon}|} \Big\rangle\frac{{\psi}^2\chi_{0,\varepsilon} }{(\delta+(|{D\boldsymbol{u}_\varepsilon}|-1)_+)^{\beta+1}}\,dx \\
        &=: \mathcal{I}_1 + \mathcal{I}_2 + \mathcal{I}_3 + \mathcal{I}_4.
\end{split}
\end{equation}

We begin by estimating the term $\mathcal{I}_1$. Using Cauchy-Schwarz inequality and since $\delta+(|D\boldsymbol{u}_\varepsilon|-1)_+\geq \delta$, we get 
\begin{equation}\label{firstpezzo}
    \begin{split}
        \mathcal{I}_1 &\leq \int_{B_{R}} \varepsilon |\nabla {u}^k_\varepsilon||\nabla \psi| \frac{2\psi}{(\delta+(|{D\boldsymbol{u}_\varepsilon}|-1)_+)^\beta}\,dx \\
        &\leq 2 \int_{B_{R}} \varepsilon\delta^{-\beta} \psi |\nabla {u}^k_\varepsilon||\nabla \psi| \,dx.
\end{split}
\end{equation}

For the term $\mathcal{I}_2$, using the weighted Young's inequality and since $\delta+(|D\boldsymbol{u}_\varepsilon|-1)_+\geq \delta$, we get
\begin{equation}\label{secondopezzo}
    \begin{split}
        \mathcal{I}_2 &\leq \theta \beta\int_{B_{R}}\frac{{\psi}^2 }{(\delta+(|{D\boldsymbol{u}_\varepsilon}|-1)_+)^{\beta}}\,dx \\
        &\quad +\frac{\beta}{4\theta}\int_{B_{R}}\varepsilon^2 |\nabla {u}^k_\varepsilon|^2\Big|\frac{{D^2\boldsymbol u_\varepsilon}{D\boldsymbol{u}_\varepsilon}}{|{D\boldsymbol{u}_\varepsilon}|}\Big|^2\frac{{\psi}^2 \chi_{0,\varepsilon}}{(\delta+(|{D\boldsymbol{u}_\varepsilon}|-1)_+)^{\beta+2}}\,dx \\
        &\leq \theta \beta\int_{B_{R}}\frac{{\psi}^2 }{(\delta+(|{D\boldsymbol{u}_\varepsilon}|-1)_+)^{\beta}}\,dx \\
        &\quad +\frac{\beta}{4\theta}\int_{B_{R}}\varepsilon^2 |D\boldsymbol {u}_\varepsilon|^2\Big|\frac{{D^2\boldsymbol u_\varepsilon}{D\boldsymbol{u}_\varepsilon}}{|{D\boldsymbol{u}_\varepsilon}|}\Big|^2{\psi}^2 \delta^{-\beta-2}\chi_{0,\varepsilon}\,dx.
\end{split}
\end{equation}

For the term $\mathcal{I}_3$, by Cauchy-Schwarz's inequality and since $\delta+(|D\boldsymbol{u}_\varepsilon|-1)_+\geq (|D\boldsymbol{u}_\varepsilon|-1)_+$, it follows that
\begin{equation}\label{terzopezzo}
    \begin{split}
        \mathcal{I}_3 \leq 2 \int_{B_{R}}(|D\boldsymbol{u}_\varepsilon|-1)_+^{p-1-\beta} \psi |\nabla\psi| \,dx. 
    \end{split}
\end{equation}

For the last term, using the weighted Young's inequality and since $\delta+(|D\boldsymbol{u}_\varepsilon|-1)_+\geq (|D\boldsymbol{u}_\varepsilon|-1)_+$, we have 
\begin{equation}\label{quartopezzo}
    \begin{split}
        \mathcal{I}_4 &\leq \theta \beta\int_{B_{R}}\frac{{\psi}^2 }{(\delta+(|{D\boldsymbol{u}_\varepsilon}|-1)_+)^{\beta}}\,dx \\
        &\quad +\frac{\beta}{4\theta}\int_{B_{R}}(|D\boldsymbol{u}_\varepsilon|-1)_+^{2(p-1)-\beta-2}\left| \frac{\Delta_{\infty}\boldsymbol{u}_\varepsilon}{|{D\boldsymbol{u}_\varepsilon}|^2} \right|^2{\psi}^2 \chi_{0,\varepsilon}\,dx,
\end{split}
\end{equation}
where in the last inequality we have used that $\langle \nabla u_\varepsilon^k,D^2 \boldsymbol{u}_\varepsilon D\boldsymbol{u}_\varepsilon\rangle$ is the $k^{th}$ component of $\Delta_\infty\boldsymbol{u}_\varepsilon$.
Collecting \eqref{terminazzo}, \eqref{firstpezzo}, \eqref{secondopezzo}, \eqref{terzopezzo} and \eqref{quartopezzo} into \eqref{cosette}, we get the thesis.

\end{proof}

Now we are in position to prove Theorem \ref{TeoinversodelPeso}.
\begin{proof}[Proof of Theorem \ref{TeoinversodelPeso}]
Let $ B_{R}\subset\subset\Omega$ and let $\boldsymbol{u}_\varepsilon$ be a weak solution to the regularized problem \eqref{eq:problregol}. By assumptions on $\boldsymbol{f}$, by Lemma~\ref{lemmainversopeso}, and for \(\theta\) sufficiently small, we deduce that
\begin{equation}\label{robadaestimare}
    \begin{split}
        C(\theta,\beta,\tau)&\int_{B_{R}} \frac{{\psi}^2}{(\delta+(|{D\boldsymbol{u}_\varepsilon}|-1)_+)^\beta} \,dx \leq 2 \int_{B_{R}} \varepsilon\delta^{-\beta} \psi |\nabla {u}^k_\varepsilon||\nabla \psi| \,dx \\
        &\quad +\frac{\beta}{4\theta}\int_{B_{R}}\varepsilon^2 |D\boldsymbol {u}_\varepsilon|^2\Big|\frac{{D^2\boldsymbol u_\varepsilon}{D\boldsymbol{u}_\varepsilon}}{|{D\boldsymbol{u}_\varepsilon}|}\Big|^2{\psi}^2 \delta^{-\beta-2}\chi_{0,\varepsilon}\,dx \\
        &\quad + 2 \int_{B_{R}}(|D\boldsymbol{u}_\varepsilon|-1)_+^{p-1-\beta} \psi |\nabla\psi| \,dx \\
        &\quad +\frac{\beta}{4\theta}\int_{B_{R}}(|D\boldsymbol{u}_\varepsilon|-1)_+^{2(p-1)-\beta-2}\left| \frac{\Delta_{\infty}\boldsymbol{u}_\varepsilon}{|{D\boldsymbol{u}_\varepsilon}|^2} \right|^2{\psi}^2  \chi_{0,\varepsilon}\,dx \\
        &=: \mathcal{I}_1 + \mathcal{I}_2 + \mathcal{I}_3 + \mathcal{I}_4,
    \end{split}
\end{equation}
where \(C(\theta,\beta,\tau)\) is a positive constant, \(k\in\{1,\dots,N\}\), \(\delta,\beta>0\), and \(\psi\) is the function defined in \eqref{eq:varphi}. Our aim is now to obtain an estimate for the right-hand side of \eqref{robadaestimare} which is uniform with respect to \(\varepsilon\). To this end, we choose \(\beta\) as in the statement of Theorem \ref{TeoinversodelPeso} and set \(\delta:=\varepsilon^{\frac{1}{\beta+2}}\). Thanks to this choice, and by \eqref{stimadelgradiente}, it follows that
\begin{equation}\label{stima100}
    \mathcal{I}_1+\mathcal{I}_3\leq C(n,N,p,R,\|\boldsymbol{f}\|_{L^q},\|D\boldsymbol{u}\|_{L^p}).
\end{equation}

Now we estimate the term $\mathcal{I}_2$. We fix \(\alpha\) as in Proposition~\ref{bellazio}. By Proposition~\ref{bellazio} (which provides an upper bound for the first term on the left-hand side of \eqref{bombolone}) and by \eqref{stimadelgradiente}, we obtain \((|D\boldsymbol u_\varepsilon|-1)_+^{\alpha}\,|D\boldsymbol u_\varepsilon|\leq C\), so that 
\begin{equation}\label{stima101}
    \begin{split}
        \mathcal{I}_{2} &= \frac{\beta}{4\theta}\int_{B_{R}}\varepsilon^2 (|D\boldsymbol u_\varepsilon|-1)_+^{-\alpha}(|D\boldsymbol u_\varepsilon|-1)_+^{\alpha}|D\boldsymbol{u}_\varepsilon|^2\Big|\frac{{D^2\boldsymbol u_\varepsilon}{D\boldsymbol{u}_\varepsilon}}{|{D\boldsymbol{u}_\varepsilon}|}\Big|^2{\psi}^2 \delta^{-\beta-2}\chi_{0,\varepsilon}\,dx \\
        &\leq C \int_{B_{R}}\varepsilon (|D\boldsymbol u_\varepsilon|-1)_+^{-\alpha}|D\boldsymbol{u}_\varepsilon|\Big|\frac{{D^2\boldsymbol u_\varepsilon}{D\boldsymbol{u}_\varepsilon}}{|{D\boldsymbol{u}_\varepsilon}|}\Big|^2{\psi}^2\chi_{0,\varepsilon} \,dx \leq C,
    \end{split}
\end{equation}
where $C$ is a positive constant not depending on $\varepsilon$. For the term $\mathcal{I}_4$, by assumptions on $\beta$ and by \eqref{stimauniforme} we get
\begin{equation}\label{stima103}
    \mathcal{I}_4\leq C(p,n,N,\beta,R,\|\boldsymbol{f}\|_{L^q},\|\boldsymbol{f}\|_{W^{1,1}},\|D\boldsymbol{u}\|_{L^p}).
\end{equation}

By \eqref{stima100}, \eqref{stima101} and \eqref{stima103}, once these bounds are inserted into \eqref{robadaestimare}, we obtain
\begin{equation}\label{bombazzo}
   \int_{B_{\tilde R}} \frac{1}{(\varepsilon^{\frac{1}{\beta+2}}+(|{D\boldsymbol{u}_\varepsilon}|-1)_+)^\beta} \,dx\leq C, 
\end{equation}
where $C$ is a constant not depending on $\varepsilon$. 
We consider $A:=\{x\in B_{\tilde R}\,:\, |D\boldsymbol{u}|\leq1\}$. By \eqref{bombazzo} and Fatou's Lemma we get 
\begin{equation*}\label{bombazzo2}
\begin{split}
   \int_{A} \liminf_{\varepsilon\rightarrow 0}\frac{1}{(\varepsilon^{\frac{1}{\beta+2}}+(|{D\boldsymbol{u}_\varepsilon}|-1)_+)^\beta} \,d\leq\liminf_{\varepsilon\rightarrow 0}\int_{A}  \frac{1}{(\varepsilon^{\frac{1}{\beta+2}}+(|{D\boldsymbol{u}_\varepsilon}|-1)_+)^\beta} \,dx\leq C.
   \end{split}
\end{equation*}
On the other hand we know from \cite[Lemma 3.3]{BDGP1} that $(|D\boldsymbol{u}_\varepsilon|-1-\delta)_+$ converges to $ (|D\boldsymbol{u}|-1-\delta)_+$, a.e. on $B_{\tilde R}$, for any $\delta>0$. From this fact one can deduce that 
$\limsup_{\varepsilon\rightarrow 0}{(\varepsilon^{\frac{1}{\beta+2}}+(|{D\boldsymbol{u}_\varepsilon}|-1)_+)^\beta}=0$. Therefore $\mathcal{L}(A)=0$. The estimate \eqref{eq:stimainverso} follows.
\end{proof}

We conclude with the proof of Theorem~\ref{Corollario2}.

\begin{proof}[Proof of Theorem \ref{Corollario2}]
\noindent The estimate~\eqref{corollario} is obtained from  ~\eqref{eq:stimaseconde} upon choosing 
$\alpha = p-2$ for $p<3$, and using the fact that $D\mathbf{u}$ is locally bounded.

If $p\geq 3$, we split ${\left|\Delta_{\infty}\boldsymbol u\right|}^q$ as follows:
    \begin{equation*}
        {\left|\Delta_{\infty}\boldsymbol u\right|}^q=(|D \boldsymbol{u}|-1)_+^{\frac{(p-2-\alpha)q}{2}} {\left|\Delta_{\infty}\boldsymbol u\right|}^q (|D \boldsymbol{u}|-1)_+^{\frac{(\alpha+2-p)q}{2}}
    \end{equation*}
    Integrating over $\Omega_\delta := \{ x \in \tilde{\Omega} : |D\boldsymbol{u}(x)| > 1+\delta \}$, for $\delta>0$, using the estimate \eqref{eq:stimaseconde}, and by H\"older inequality (note that that if $p\geq3$, then $q<2$), we deduce
    \begin{equation*}
    \begin{split}
        \int_{\Omega_\delta} &{\left|\Delta_{\infty}\boldsymbol u\right|}^q\,dx = \int_{\Omega_\delta}(|D \boldsymbol{u}|-1)_+^{\frac{(p-2-\alpha)q}{2}} {\left|\Delta_{\infty}\boldsymbol u\right|}^q (|D \boldsymbol{u}|-1)_+^{\frac{(\alpha+2-p)q}{2}}\,dx\\
        &\leq \left( \int_{\Omega_\delta}(|D \boldsymbol{u}|-1)_+^{
        p-2-\alpha} {\left|\Delta_{\infty}\boldsymbol u\right|}^2 \,dx \right)^{\frac{q}{2}} \left ( \int_{\Omega_\delta}(|D \boldsymbol{u}|-1)_+^{\frac{(\alpha+2-p)q}{2-q}}\,dx\right)^{\frac{2-q}{2}}\\
        & \leq C\left ( \int_{\Omega_\delta}(|D \boldsymbol{u}|-1)_+^{\frac{(\alpha+2-p)q}{2-q}}\,dx\right)^{\frac{2-q}{2}}
    \end{split}
    \end{equation*}
    We notice that since $p \geq 3$, taking $\alpha \simeq 1$, if $q < \frac{p-1}{p-2}$ the right hand side is bounded. Indeed, it follows by Theorem \ref{TeoinversodelPeso}, since $\frac{(p-2-\alpha)q}{2-q} < p-1$.\\
    Therefore,
    \begin{equation*}
        \int_{\Omega_\delta} {\left|\Delta_{\infty}\boldsymbol u\right|}^q\,dx \leq C,
    \end{equation*}
where $C$ does not depend on $\delta$. Letting $\delta \to 0$, using the fact that the Lebesgue measure of $\{|D\mathbf{u}|\le 1\}$ is zero, we obtain the thesis.

\end{proof}

\end{document}